\newtheorem{thm}{Theorem}[section]
\newtheorem{cor}[thm]{Corollary}
\newtheorem{ex}[thm]{Example}
\newtheorem{lem}[thm]{Lemma}
\newtheorem{nota}[thm]{Notation}
\newtheorem{prop}[thm]{Proposition}
\newtheorem{as}[thm]{Assumption}
\theoremstyle{definition}
\newtheorem{df}[thm]{Definition}
\newtheorem{rem}[thm]{Remark}
\newtheorem{notation}[thm]{Notation}
\numberwithin{equation}{section}
\newcommand{\tX}{\ensuremath{{\widetilde{X}_x}}}
\newcommand{\tWa}{\ensuremath{{\widetilde{W}_{x,\alpha}}}}
\newcommand{\PSI}{\Psi}
\def\enddoc{\end{document}}
\newcommand{\rrem}[1]{{\color{purple} [*** #1 ***] \marginpar{\color{red}\mathversion{bold}$*{<} \the\NNN {>} *${\global\advance\NNN by 1}}}}
\newcommand{\grem}[1]{{\color{red} \ ***[#1]*** \ \marginpar{\color{OliveGreen}\mathversion{bold}$*{<} \the\NNNg {>} *${\global\advance\NNNg by 1}}}}
\newcommand{\rmr}[1]{{ \marginpar{ \color{red} \mathversion{bold}$*{<} \the\NNN {>} *$ #1 {\global\advance\NNN by 1}}}}
\newcommand{\rmg}[1]{{ \marginpar{ \color{OliveGreen} \mathversion{bold}$*{<} \the\NNN {>} *$ #1 {\global\advance\NNN by 1}}}}
\begin{document}

\title[Integral estimates of non-smooth Girsanov densities]
{
Ornstein-Uhlenbeck processes with singular drifts: integral estimates and Girsanov densities
}

\author[Gordina]{Maria Gordina{$^{*}$}}
\thanks{\footnotemark {$*$}Research was supported in part by NSF Grant DMS-1712427, and a Simons Fellowship.}
\address{Department of Mathematics\\
University of Connecticut\\
Storrs, CT 06269, U.S.A.} \email{maria.gordina{@}uconn.edu}

\author[R\"ockner]{Michael
R\"ockner{$^{\ddag}$}}
\thanks{\footnotemark {$^\ddag$} Research was supported in part by the German Science Foundation (DFG) through CRC 1283.}
\address{$^{\ddag}$ Department of Mathematics\\
Bielefeld University\\
D-33501 Bielefeld, Germany}
\email{roeckner{@}math.uni-bielefeld.de}

\author[Teplyaev]{Alexander Teplyaev{$^{\dag}$}}
\thanks{\footnotemark {$\dag$} Research was supported in part by NSF Grant DMS-1613025.}
\address{Department of Mathematics\\
University of Connecticut\\
Storrs, CT 06269, U.S.A.} \email{teplyaev{@}uconn.edu}

\keywords{Ornstein-Uhlenbeck process, singular perturbation, nonlinear infinite-dimensional stochastic differential equations, non-Lipschitz monotone coefficients, Girsanov theorem.}

\subjclass{Primary 60H10; Secondary 35R15, 60H15, 47D07, 47N30.}

\begin{abstract}
We consider a perturbation of a Hilbert space-valued Ornstein--Uhlenbeck process by a class of singular nonlinear non-autonomous maximal monotone time-dependent drifts.
The only further assumption on the drift is that it is bounded on balls in the Hilbert space uniformly in time. First we introduce a new notion of generalized solutions for such equations which we call pseudo-weak solutions and prove that they always exist and obtain pathwise estimates in terms of the data of the equation. Then we prove that their laws are absolutely continuous with respect to the law of the original Ornstein--Uhlenbeck process.
In particular, we show that pseudo-weak solutions always have continuous sample paths.
In addition, we obtain integrability estimates of the associated Girsanov densities.
Some of our results concern non-random equations as well, while probabilistic results are new even in finite-dimensional autonomous settings.
\end{abstract}

\maketitle
\tableofcontents

\renewcommand{\contentsname}{Table of Contents}


\section{Introduction}\label{s.1}

Suppose $H$ is a real separable Hilbert space with an inner product $\langle \cdot, \cdot \rangle$ and the corresponding norm $\vert \cdot \vert_{H}$. The aim of this paper is to study solutions to the following $H$-valued stochastic differential equation
\begin{equation}\label{e.2.1}
dX_{t}=\left( AX_{t}+
{F} \left( t, X_{t} \right)\right) dt+ \sigma dW_{t}, \ X_{0}=x \in H, t \geqslant 0.
\end{equation}
Here $W_{t}$ is a cylindrical Wiener process in $H$ on some filtered probability space $\left( \Omega, \mathcal{F}, \mathcal{F}_{t}, \mathbb{P} \right)$ satisfying the usual conditions of right continuity and $\mathbb{P}$-completeness. For the precise setting we refer to Section~\ref{ss.Assumptions}.

We consider Equation \eqref{e.2.1} without the standard assumption on $F (t, \cdot)$ being locally Lipschitz continuous. The motivation for our study includes a better understanding of equations such as \eqref{e.2.1} with time-dependent drifts of not necessarily polynomial growth.

Equation \eqref{e.2.1} can be viewed as a nonlinear non-autonomous perturbation of the stochastic differential equation corresponding to an Ornstein-Uhlenbeck process. In fact, it is a long-standing open problem to find optimal or nearly optimal conditions on $F $ such that \eqref{e.2.1} has a solution under the usual assumption that $A$ generates a $C_0$-semigroup on $H$ (see e.g.~\cite{DaPratoZabczykBook2014, DaPratoRockner2002} and the references therein).
If $F $ is maximal monotone and single-valued with $D_F=H$, then we can equivalently rewrite \eqref{e.2.1} as the random equation
\begin{equation}\label{eq:mr_1.2'}
 dZ_t = \left( A Z_t + F(t, Z_t + W_{0, A, \sigma} (t)) \right) dt, \quad Z_0 =x,
\end{equation}
where $Z_t=X_t-W_{0, A, \sigma}(t)$ and the Ornstein-Uhlenbeck process $W_{0,A,\sigma}$ solves \eqref{e.2.1} for $F_{}\equiv0$, $x =0$.
Moreover, in this case one can easily obtain a unique solution by classical results due to F.~Browder, Kato, Komura and Rockafellar in \cite{BrowderF1967a, Kato1967a, Komura1967, Rockafellar1970a}. However, the assumption $D_F = H$ excludes many interesting examples \cite{DaPratoRockner2002} and \cite[Section 7.2]{DaPratoZabczykBook2014}, and therefore we include the case $D_F \subsetneq H$, and also allow $F$ to be multi-valued, see Assumptions~\ref{A3} and~\ref{A4} below. The first main result of our paper is that under natural assumptions on $F$, namely, Assumptions~\ref{A3} and~\ref{A4}, \eqref{e.2.1} always has a solution in a generalized sense. Namely, we introduce \textbf{pseudo-weak solutions} in Definition~\ref{d-sol} and discuss them in detail in Section~\ref{A}.

The main results of the paper include a proof of existence of pseudo-weak solutions, pathwise \emph{a priori} estimates of these solutions in Section~\ref{s.3}, absolute continuity of the law of these solutions with respect to the law of the Ornstein-Uhlenbeck process, and finally integral estimates of the corresponding Radon-Nikodym densities in Section~\ref{s.4new}. Our approach can be interpreted as an extension of the classical use of Girsanov transformation to find a solution for a stochastic differential equation with a nonzero (but at most linearly growing) drift. The main idea behind results such as Theorem~\ref{t-sol-2} is that we can estimate $\varphi\left( \vert X_t \vert_{H}^{2}\right)$, where $X_t$ is a solution to Equation \ref{e.2.1} and $\varphi$ is a suitable function. The function $\varphi$ can be chosen by looking at the behavior of the nonlinearity $F $ at infinity, so the estimate only depends on the growth of $F $ at infinity, not on the nonlinearity itself.

Some of our results on the Girsanov transform are closely related to the infinite-dimensional estimates by D.~Gatarek and B.~Go{\l}dys in \cite{GatarekGoldys1992, GatarekGoldys1997}. They considered equations in Banach spaces, while we restrict our consideration to Hilbert space though for non-autonomous perturbations. In the future work we plan to extend our techniques to the reaction-diffusion equation in a Banach space. Our estimates of solutions and Girsanov densities are new even in finite dimensions, for example, compared to the ones due to N.~V.~Krylov in \cite{Krylov1990a, Krylov2002simple} and \cite[Chapter IV, \S 3]{KrylovDiffusionProcessesBook}.

We would like to comment on some of the previous results both in terms of the assumptions we make and the techniques we use. We describe the setting in Section~\ref{ss.Assumptions} in detail, including the assumptions on the coefficients of the non-autonomous equation \eqref{e.2.1}. The approach we use does not rely on an invariant measure, which is not available for non-autonomous equations, and therefore we do not use typical assumptions such as finite moments of the invariant measure and integrability properties of the nonlinear drift with respect to this measure.
The paper consists of three major parts which are intertwined: in Section~\ref{ss.PseudoWeakSolutions} we introduce a notion of pseudo-weak solutions to \eqref{e.2.1}, and prove their existence in Section~\ref{p-sol}. We use monotonicity of the coefficients of the equation to prove \emph{a priori} pathwise bounds in Theorem~\ref{t-sol-2}. In general one expects that our assumptions might imply weak
uniqueness, by appealing to Gronwall's lemma, but this seems out of reach for now in a general setting such as ours.
We refer to \cite{BogachevDaPratoRockner1996, DaPratoRockner2002} for a discussion of when and how martingale solutions to \eqref{e.2.1} can be constructed, and for more details on such solutions.

We also would like to mention several connections of our results to the cases when $F$ in \eqref{e.2.1} does not depend on $t$.
In this situation one can use our results to prove smoothness results for an invariant measure, closability of the corresponding Dirichlet form etc.
Note that in the current paper we do not address the question under which assumptions an invariant measure exists. Suppose there is an invariant measure as described in \cite{DaPratoRockner2002}, then one can use the Girsanov transform in Theorem \ref{t-sol-3} to show formally that the invariant measure is quasi-invariant under certain linear shifts. This leads to a possibility of using
\cite[Theorem 2.2]{AlbeverioRockner1990} and \cite[Theorem 1.3]{AlbeverioRockner1988a} to prove closability of the Dirichlet form. The main ingredient here is
the lower semicontinuity of the Radon-Nikodym density as described in \cite[p.122]{AlbeverioHoegh-Krohn1982} among other references. Finally, there are other recent approaches to quasi-invariance of semigroups in infinite dimensions which rely on functional inequalities
\cite{
	RocknerWang2013,
	GordinaRocknerWang2011,
	DriverGordina2009,
	DriverGordina2008,
	Gordina2017,
	Melcher2009,
	BaudoinGordinaMelcher2013,
	BaudoinFengGordina2019}, and these methods are not applicable to singular perturbations considered in the current paper.

\subsection*{Acknowledgements}
The authors thank G.~Da Prato, M.~Hairer, M.~Hinz and N.~Krylov for helpful discussions and suggestions.
The authors are grateful to anonymous referees for corrections and suggested improvements to the paper.

\section{Setting and main results}

\subsection{Setting and assumptions}\label{ss.Assumptions}

Let $H$ be a real separable Hilbert space with an inner product $\langle \cdot, \cdot \rangle$ and the corresponding norm $\vert \cdot \vert_{H}$. We denote by $B\left( H \right)$ the space of bounded linear operators equipped with the operator norm $\Vert \cdot \Vert$.
The Hilbert-Schmidt norm is denoted by $\Vert \cdot \Vert_{HS}$.
We suppose that the coefficients $A$, $F$ and $B$ in Equation \eqref{e.2.1} satisfy the following assumptions.

\begin{as}\label{A1} The operator $\left( A, D_{A} \right)$ generates a $C_{0}$-semigroup on $H$ denoted by $e^{tA}$, $t \geqslant 0$. We assume that there is $\beta > 0$ such that for all $x \in D_{A}$
\[
\langle Ax, x \rangle \leqslant -\beta \vert x \vert^{2}_{H}.
\]
\end{as}
Note that Assumption~\ref{A1} implies that $A$ is $m$-dissipative on $H$.

\begin{as}\label{A2} Both $\sigma$ and $\sigma^{-1}$ are in $B\left( H \right)$ with $\sigma$ being self-adjoint and positive such that for some $\varepsilon>0$
\[
\int_{0}^{T} t^{-\varepsilon}\Vert e^{tA} \sigma \Vert_{HS}^{2} \, dt < \infty, \text{ for all} T>0.
\]
\end{as}
Recall that under Assumption~\ref{A2} the Ornstein-Uhlenbeck process
\begin{equation}\label{eq:WxAsigma}
W_{x, A, \sigma} \left( t \right):= e^{tA}x+\int_0^t e^{(t-s)A} \sigma dW_s, \; t \geqslant 0,
\end{equation}
is pathwise continuous by \cite[Proposition 2.3]{DaPratoBook2004} which is based on the technique described in \cite{DaPratoKwapienZabczyk}.

\begin{as}\label{A3} Denote by $2^{H}$ the power set of the Hilbert space $H$. Let $F\left( t, \cdot \right): [ 0, \infty) \times D_{F} \to 2^{H}$ be a family of maps such that $D_{F}$ is a non-empty Borel set in $H$, and $dt\otimes \mathbb P$-almost surely the Ornstein-Uhlenbeck process $W_{x,A,\sigma} \in D_F$ for all $x \in D_F$. Furthermore, $F\left( t, \cdot \right)$ is an $m$-dissipative map, that is, for any $x_{1}, x_{2} \in D_{F}$
\[
\langle y_{1}-y_{2}, x_{1}-x_{2} \rangle \leqslant 0, \text{ for any } y_{1} \in F\left( t, x_{1} \right), y_{2} \in F\left( t, x_{2} \right), t \in [ 0, \infty)
\]
and for any $\alpha >0$ and $t \in [ 0, \infty)$
\[
\operatorname{Range}\left( \alpha I - F\left( t, \cdot \right) \right):=
\{ \alpha x - y:y\in F\left( t, x \right),x\in D_F \}=H.
\]
\end{as}

We refer to \cite[Section II.3]{BarbuBook1976} and \cite[Chapter 3]{BarbuBook2010} for basic facts about dissipative maps, as well as to the exposition in \cite{WiesingerPhDThesis}. In particular, it is known that in a Hilbert space a map is $m$-dissipative if and only if it is maximal dissipative, that is, it has no proper dissipative extensions. By \cite[Proposition 3.5(iv), Chapter II]{BarbuBook1976} for any $\left( t, x \right) \in [ 0, \infty) \times D_{F}$, the set $F\left( t, x \right)$ is non-empty, closed and convex, and so we can consider the well-defined
single-valued
map
\begin{equation}
F_{0}\left( t, x \right):=\left\{ y \in F\left( t, x \right): \vert y \vert_{H}=\inf \{ \vert z \vert_{H}, z \in F\left( t, x \right)\} \right\} \text{ for any } x \in D_{F}.
\end{equation}
This definition is the same as in \cite{DaPratoRockner2002} except that we allow dependence on time.
Using the Yosida approximation to $F$ described in Section~\ref{s2} we see that function $F_{0}\left( t, x \right)$, usually called \emph{the minimal section
of the maximal monotone operator} $F$, is Borel measurable.
Our next assumption is similar to the ones introduced in \cite{DaPratoZabczyk1992a, GatarekGoldys1997}.

\begin{as}\label{A4}
$F_0(t,\cdot)$ is uniformly bounded in $t\in[0,\infty)$ on balls in $H$, that is,
\begin{equation}\label{e.3.7}
 a\left( r \right):=\sup_{x \in B_r \cap D_F} \sup_{t\in[0,\infty)} \vert F_{0}\left( t, x \right) \vert_{H}
<\infty,
\end{equation}
where $B_{r}:=\left\{ x \in H: \vert x \vert_{H} <r \right\}$, $r>0$.
\end{as}
We define $a(0):=0$.
The function $a$ is non-decreasing and left-continuous,
and therefore Borel measurable.

We are mostly interested in the case when $\lim_{u \to \infty}a\left( u \right)=\infty$.
Assumption \eqref{A4} simply means that $F_{0}\left( t, x \right)$ is bounded on balls in its domain of definition in $x$, uniformly in $t$. In other words, we assume that function $F_{0}\left( \cdot, \cdot \right)$ is locally bounded in the space variable uniformly in time.

\subsection{Pseudo-weak solutions and their properties}\label{ss.PseudoWeakSolutions}

Throughout this paper we assume that Assumptions~\ref{A1}, \ref{A2}, \ref{A3} and~\ref{A4} hold. The first step in defining pseudo-weak solutions to Equation \eqref{e.2.1} requires suitable approximations to $F$. We use the Yosida approximation $F_{\alpha}, \alpha > 0$ described in Section~\ref{ss.Yosida} below.

By $Z_{\alpha, t}^{x}$ we denote the continuous $H$-valued process which is a mild solution to the family of regularized random ordinary differential equations
\begin{equation}\label{e.3.5}
dZ_{\alpha, t}^{x}=\left( AZ_{\alpha,t}+F_{\alpha}\left( t, Z_{\alpha,t}^{x}+W_{0, A, \sigma}\left( t \right)\right)\right) dt,
\qquad
Z_{\alpha,0}^{x}=x,
\end{equation}
where $W_{0, A, \sigma}$ is the pathwise continuous Ornstein-Uhlenbeck process defined by \eqref{eq:WxAsigma} with $x=0$. One can use \cite[Chapter 6, Theorem 1.2, page 184]{PazyBook1983} to justify the existence of mild solutions to \eqref{e.3.5}.
We note that technically speaking \cite{PazyBook1983} assumes that $F_\alpha$ is continuous in time, but it is clear that this assumption is not essential, and it is enough to assume joint measurability in time and space, and Lipschitz continuity in space, with the Lipschitz constant uniform in time, which holds for $F_{\alpha}$ as we explain in Section~\ref{ss.Yosida}.

The stochastic differential equation
\begin{align}
d X_{t} & =\left( AX_{t}+F_{\alpha}\left(t, X_{t} \right)\right)dt+ \sigma d W_{t}, \label{e.3.6}
\\
X_{0} &=x \in H \notag
\end{align}
has a mild solution $X_{\alpha, t}^{x}=X_{\alpha}\left( t, x \right), t \geqslant 0$, with $\mathbb P$-a.s.{\ }continuous sample paths.
Even though we have dependence on $\alpha$ in this equation, we prove that solutions $X_{\alpha, t}^{x}$ satisfy bounds \eqref{e.4.9}, which are uniform in $\alpha$.
In addition, it is clear that $Z_{\alpha,t}^{x}$ is a mild solution to the random ordinary differential equation \eqref{e.3.5} if and only if
\[
X_{\alpha, t}^{x}:=Z_{\alpha,t}^{x}+W_{0, A, \sigma}\left( t \right)
\]
is a mild solution to \eqref{e.3.6}.

Before proceeding to the notion of pseudo-weak solutions, we would like to comment on the intuition behind it. First we introduce pseudo-weak limits to deal with non-metrized topology. One of the consequences of this definition is that the convergence is governed by a function $\psi$, and therefore the limit might be different for different choices of $\psi$ as we discuss later. In what follows, unless stated otherwise, a pseudo-weak limit means a $\psi$-pseudo-weak limit, in the sense of Definition~\ref{d.pseudo-weak} and Remark~\ref{r.2.2} below.

\begin{df} \label{d-sol}
An adapted $H$-valued process $X^{x}$ is a \emph{pseudo-weak solution} to \eqref{e.2.1} if
it is a pseudo-weak limit point in
$L^2\left([0, \infty) \times \Omega, dt\times\mathbb{P}; H \right)$
of the approximating processes $X_{\alpha}^{x}$ defined by \eqref{e.3.6}.
\end{df}

\begin{rem}
Obviously, such pseudo-weak limit points are automatically adapted. More surprisingly, Theorem~\ref{t-sol-3} implies that they are also continuous $\mathbb{P}$-a.s. in $H$.
\end{rem}

The main results of our paper are summarized in the following three theorems. We start with pathwise \emph{a priori} estimates. For this purpose we introduce the function space $\mathcal{M}$ as follows.

\begin{df} \label{df-phi}
Let $\mathcal{M}$ denote the space of continuous functions $\varphi: [0, \infty) \to (0, \infty)$ such that
\begin{enumerate}
\item $\varphi$ is a strictly increasing convex function which is $C^2$ on $(0, \infty)$;
\item\label{nota-phi2} the limit $\frac{u\varphi^{\prime}\left( u \right)}{\varphi\left( u \right)}\xrightarrow[u \to \infty]{} L_{\varphi}$ exists, and $L_{\varphi} \in [1, \infty]$.
\end{enumerate}
\end{df}
For properties and examples of functions in $\mathcal{M}$ we refer to Section~\ref{s.Mspace}.

\begin{thm}[Uniform pathwise \emph{a priori} $\varphi$-estimates]\label{t-sol-2}
Under Assumptions~\ref{A1}, \ref{A2}, \ref{A3}, \ref{A4},
for every $\varphi\in\mathcal{M}$ we have the following estimates for a pseudo-weak solution $X_{t}^{x}$ to Equation \eqref{e.2.1}
\begin{equation}\label{e.4.9}
\varphi\left( \vert X_{t}^{x} \vert^{2}_{H}\right) \leqslant
 \frac{e^{-\beta t}}{2}\varphi\left(4 \vert x \vert^{2}_{H}\right) +\frac{1}{2}K_{\varphi}\left( t \right) +
 \frac{\beta t}{2} K_{\varphi, \beta, a}\left( t \right)< \infty
 \hskip0.1in a.s.
 \end{equation}
Here $K_{\varphi}\left( t \right)$ and $K_{\varphi, \beta, a}\left( t \right)$ are random functions defined in Notation~\ref{n.3.8} below. These functions only depend on $\beta, \sigma, A$ and $a$.
\end{thm}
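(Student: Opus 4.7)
\medskip
\noindent\textbf{Proof plan.} My plan is to derive the estimate first at the level of the Lipschitz approximations $X_{\alpha,t}^x$ with a right-hand side that is independent of $\alpha$, and then transfer it to any pseudo-weak limit. Writing $X_{\alpha,t}^x = Z_{\alpha,t}^x + W_{0,A,\sigma}(t)$, where $Z_{\alpha,t}^x$ is the mild solution of the random ODE \eqref{e.3.5}, I would work on the differentiable object $|Z_{\alpha,t}^x|^2_H$ pathwise. After the standard Yosida-regularization argument that legitimizes differentiation of $|Z_{\alpha,t}|^2_H$ for mild solutions, Assumption~\ref{A1} gives $\langle AZ_{\alpha,t},Z_{\alpha,t}\rangle\leqslant-\beta|Z_{\alpha,t}|^2_H$, while Assumption~\ref{A3} together with the growth bound of Assumption~\ref{A4} (which passes to the Yosida approximant $F_\alpha$ in the form $|F_\alpha(t,x)|_H\leqslant a(|x|_H)$, see Section~\ref{ss.Yosida}) controls the drift term. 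Writing $Z_{\alpha,t}=X_{\alpha,t}-W_{0,A,\sigma}(t)$ in the inner product $\langle F_\alpha(t,X_{\alpha,t}),Z_{\alpha,t}\rangle$ and combining monotonicity at a reference point with Cauchy--Schwarz yields a pathwise inequality
\[
\tfrac{d}{dt}|Z_{\alpha,t}|^2_H\leqslant-2\beta|Z_{\alpha,t}|^2_H+\Phi\bigl(|W_{0,A,\sigma}(t)|_H,\,a(\cdot)\bigr),
\]
whose right-hand side is $\alpha$-free once $|X_{\alpha,t}|_H$ is dominated by $|Z_{\alpha,t}|_H+|W_{0,A,\sigma}(t)|_H$ and $a$ is monotone.

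Next I would apply $\varphi\in\mathcal{M}$. The chain rule gives $\tfrac{d}{dt}\varphi(4|Z_{\alpha,t}|^2_H)=4\varphi'(4|Z_{\alpha,t}|^2_H)\tfrac{d}{dt}|Z_{\alpha,t}|^2_H$, and the convexity of $\varphi$ supplies the elementary inequality $u\varphi'(u)\geqslant\varphi(u)-\varphi(0)$ (since $\varphi(u)-\varphi(0)=\int_0^u\varphi'(s)\,ds\leqslant u\varphi'(u)$). This converts the dissipative contribution into $-\beta\varphi(4|Z_{\alpha,t}|^2_H)$ up to an additive constant, while the condition $u\varphi'(u)/\varphi(u)\to L_\varphi\in[1,\infty]$ from Definition~\ref{df-phi}\eqref{nota-phi2} is precisely what allows me to dominate the remaining factor $\varphi'(4|Z|^2_H)\,a(|X|_H)\,|W_{0,A,\sigma}(t)|_H$ by a quantity depending only on $\beta,\sigma,A,a$ and $\sup_{s\leqslant t}|W_{0,A,\sigma}(s)|_H$. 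This produces a scalar linear differential inequality of the form $\tfrac{d}{dt}\varphi(4|Z_{\alpha,t}|^2_H)\leqslant-\beta\varphi(4|Z_{\alpha,t}|^2_H)+G(t)$ with $G$ independent of $\alpha$.

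Gronwall then yields $\varphi(4|Z_{\alpha,t}|^2_H)\leqslant e^{-\beta t}\varphi(4|x|^2_H)+\int_0^t e^{-\beta(t-s)}G(s)\,ds$, and the integral is bounded by $\beta t\, K_{\varphi,\beta,a}(t)$, matching the shape prescribed by Notation~\ref{n.3.8}. Using $|X_{\alpha,t}|^2_H\leqslant 2|Z_{\alpha,t}|^2_H+2|W_{0,A,\sigma}(t)|^2_H$ and the convexity of $\varphi$ once more,
\[
\varphi(|X_{\alpha,t}|^2_H)\leqslant\tfrac{1}{2}\varphi(4|Z_{\alpha,t}|^2_H)+\tfrac{1}{2}\varphi(4|W_{0,A,\sigma}(t)|^2_H),
\]
which, after absorbing the pure $W_{0,A,\sigma}$-term into $K_\varphi(t)$, produces exactly the three-term bound~\eqref{e.4.9} for the approximants, with right-hand side independent of $\alpha$ and almost surely finite.

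The final step is to transfer the inequality to any pseudo-weak solution $X_t^x$. Since the bound holds $\mathbb{P}$-a.s.\ for every $\alpha$ and the right-hand side does not depend on $\alpha$, lower semicontinuity of the appropriate functional under the $\psi$-pseudo-weak convergence (Definition~\ref{d.pseudo-weak} and Remark~\ref{r.2.2}) allows the inequality to pass to the limit along any defining subsequence. The main obstacle I anticipate is Step 2: keeping the $\varphi'$-weighted error terms under uniform control in $\alpha$ using only the asymptotic relation $u\varphi'(u)/\varphi(u)\to L_\varphi$ from Definition~\ref{df-phi}, since this is what bridges the dissipation we naturally produce ($|Z|^2\varphi'$) with what Gronwall needs to see ($\varphi$ itself), and since any looseness here will either spoil $\alpha$-uniformity or lose the exponential prefactor $e^{-\beta t}$.
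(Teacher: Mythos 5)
Your overall architecture matches the paper's: regularize also in $A$ (via $A_\lambda$) to justify differentiating, work on $Z_{\alpha,t}^x=X_{\alpha,t}^x-W_{0,A,\sigma}(t)$, use dissipativity of $A$ and monotonicity of $F_\alpha$ at the reference point $W_{0,A,\sigma}(t)$, run Gronwall, split $\varphi(|X_{\alpha,t}|^2_H)\leqslant\tfrac12\varphi(2|Z_{\alpha,t}|^2_H)+\tfrac12\varphi(2|W_{0,A,\sigma}(t)|^2_H)$ by convexity, and pass to the pseudo-weak limit (your ``lower semicontinuity'' step is exactly Corollary~\ref{corLimsup}, applied after composing with $\varphi^{-1}$ and then $\varphi$). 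But the crucial step is precisely where you locate your ``main obstacle,'' and as written it fails, for two reasons. First, the forcing term that the monotonicity trick leaves behind is $2\varphi'(|Z|^2_H)\,|F_\alpha(t,W_{0,A,\sigma}(t))|_H\,|Z|_H\leqslant 2\varphi'(|Z|^2_H)\,a(|W_{0,A,\sigma}(t)|_H)\,|Z|_H$: the growth function $a$ must end up evaluated at the Ornstein--Uhlenbeck process only, and the Cauchy--Schwarz factor is $|Z|_H$. Your Step 2 instead carries the factor $a(|X_{\alpha,t}|_H)$, to be bounded by $a(|Z|_H+|W|_H)$; since $a$ has arbitrary growth, a term like $\varphi'(|Z|^2_H)\,a(|Z|_H+|W|_H)$ cannot be absorbed by the dissipation $-\beta|Z|^2_H\varphi'(|Z|^2_H)$ (take $a(u)=e^{u^2}$), and it is certainly not ``a quantity depending only on $\beta,\sigma,A,a$ and $\sup_{s\leqslant t}|W_{0,A,\sigma}(s)|_H$'' --- that version of the estimate is circular, since bounding $|X|$ is the goal.

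Second, even with the correct forcing $c\sqrt{u}\,\varphi'(u)$, $c=a(|W_{0,A,\sigma}(t)|_H)$, $u=|Z|^2_H$, your plan to (i) convert the dissipation alone via $u\varphi'(u)\geqslant\varphi(u)-\varphi(0)$ and (ii) separately dominate the $\varphi'$-weighted forcing by a $Z$-free quantity cannot work: $\varphi'$ is unbounded (e.g.\ $\varphi(u)=e^u$), so there is no bound on $\varphi'(u)\,c\sqrt{u}$ independent of $u$, and the limit $u\varphi'(u)/\varphi(u)\to L_\varphi$ does not supply one. The paper's resolution is Lemma~\ref{l.3.1}(i): keep forcing and dissipation together and sacrifice half of the dissipation, i.e.\ for $B=\beta/2<\beta L_\varphi$ one has
\begin{equation*}
\varphi'(u)\bigl(c\sqrt{u}-\beta u\bigr)\leqslant \frac{\beta}{2}\,\varphi\Bigl(\frac{c^2}{\beta^2}\Bigr)-\frac{\beta}{2}\,\varphi(u)\qquad\text{for all }u>0,
\end{equation*}
which turns the pathwise inequality into $\frac{d}{dt}\varphi(|Z|^2_H)\leqslant\beta\bigl(\varphi\bigl(a(|W_{0,A,\sigma}(t)|_H)^2/\beta^2\bigr)-\varphi(|Z|^2_H)\bigr)$ and, after Gronwall, produces exactly the $K_{\varphi,\beta,a}$ term of Notation~\ref{n.3.8}. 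Without this absorption (or an equivalent one), your differential inequality $\frac{d}{dt}\varphi(4|Z_{\alpha,t}|^2_H)\leqslant-\beta\varphi(4|Z_{\alpha,t}|^2_H)+G(t)$ with $G$ independent of $Z$ and $\alpha$ is not justified, so the proposal has a genuine gap at its central step; the remaining steps (Gronwall, convexity splitting, and the limit passage via Corollary~\ref{corLimsup}) are sound and coincide with the paper's.
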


\begin{thm}[Pseudo-weak solutions]\label{t-sol-1}
Under Assumptions~\ref{A1}, \ref{A2}, \ref{A3}, \ref{A4}, there exists a pseudo-weak solution $\left\{ X_t^x \right\}_{t \geqslant 0}$ to Equation \eqref{e.2.1}, i.e.

\begin{equation}\label{e-sol}
 X_t^x:=Z_{t}^{x}+W_{0, A, \sigma}\left( t \right) \hskip0.1in 
 \ t\geqslant0,
\end{equation}
where the process $Z_t^x$ is a pseudo-weak limit point of $Z_{\alpha, t}^{x}$, as $\alpha \to 0$, and $Z_{\alpha, t}^{x}$ is a solution to Equation \eqref{e.3.5}. Moreover,
$\left(dt\times\mathbb{P}\right)$-{a.s.} we have the following estimate
\begin{equation}\label{e|Z|}
 \vert X_{t}^{x} \vert_{H}
 \leqslant
 \vert x \vert_{H}
 e^{-\beta t}+\int_{0}^{t}e^{-\beta\left( t-s \right)}
 a\left(\vert W_{0, A, \sigma}\left( s \right)\vert_{H} \right) ds+|W_{0, A, \sigma}\left( t \right)|_{H}.
 \end{equation}
\end{thm}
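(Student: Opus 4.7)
The plan is to deduce everything from the uniform pathwise bound \eqref{e|Z|} applied to the approximating processes $Z_{\alpha,t}^x$ from \eqref{e.3.5}, and then extract a pseudo-weak limit as $\alpha \downarrow 0$. First I would derive the estimate at the level of the approximations. Using $X_{\alpha,t}^x = Z_{\alpha,t}^x + W_{0,A,\sigma}(t)$ and formally computing
\begin{equation*}
\tfrac{d}{dt}|Z_{\alpha,t}^x|_H^2
= 2\langle A Z_{\alpha,t}^x, Z_{\alpha,t}^x\rangle
+ 2\langle F_\alpha(t, X_{\alpha,t}^x), Z_{\alpha,t}^x\rangle,
\end{equation*}
I would invoke the dissipativity of $F_\alpha$ (inherited from $F$ by the Yosida approximation), applied to the pair $X_{\alpha,t}^x$ and $W_{0,A,\sigma}(t)$, to obtain
\[
\langle F_\alpha(t, X_{\alpha,t}^x)-F_\alpha(t, W_{0,A,\sigma}(t)),Z_{\alpha,t}^x\rangle \leqslant 0,
\]
whence, using $|F_\alpha(t,\cdot)|_H \leqslant |F_0(t,\cdot)|_H$ on $D_F$ and Assumption~\ref{A4},
\[
2\langle F_\alpha(t, X_{\alpha,t}^x), Z_{\alpha,t}^x\rangle
\leqslant 2\,a(|W_{0,A,\sigma}(t)|_H)\,|Z_{\alpha,t}^x|_H.
\]
Combined with Assumption~\ref{A1}, this gives $\tfrac{d}{dt}|Z_{\alpha,t}^x|_H \leqslant -\beta |Z_{\alpha,t}^x|_H + a(|W_{0,A,\sigma}(t)|_H)$ and Gronwall's inequality yields, uniformly in $\alpha$,
\[
|Z_{\alpha,t}^x|_H \leqslant |x|_H e^{-\beta t}
+\int_0^t e^{-\beta(t-s)} a(|W_{0,A,\sigma}(s)|_H)\,ds.
\]
A triangle inequality then gives the bound \eqref{e|Z|} for $X_{\alpha,t}^x$.

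Next I would extract a pseudo-weak limit. The uniform bound just obtained, together with the $\varphi$-estimates of Theorem~\ref{t-sol-2} applied to each $X_{\alpha,t}^x$ (whose right-hand side is independent of $\alpha$ by assumption), provides the compactness needed by Definition~\ref{d.pseudo-weak} to produce a pseudo-weak limit point $Z_t^x$ of $(Z_{\alpha,t}^x)_{\alpha>0}$ along some sequence $\alpha_n\downarrow 0$. Since $W_{0,A,\sigma}(t)$ does not depend on $\alpha$, the process $X_t^x := Z_t^x + W_{0,A,\sigma}(t)$ is a pseudo-weak limit of $X_{\alpha_n,t}^x$, hence a pseudo-weak solution to \eqref{e.2.1} in the sense of Definition~\ref{d-sol}, which simultaneously verifies the decomposition \eqref{e-sol}. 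Finally, since the right-hand side of \eqref{e|Z|} is $\alpha$-independent and the norm functional $|\cdot|_H$ is lower semicontinuous along pseudo-weak convergence, the almost-sure bound \eqref{e|Z|} passes to the limit.

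Two technical points will require the most care. First, the chain-rule computation for $|Z_{\alpha,t}^x|_H^2$ is only formal at the level of mild solutions; one should justify it either by passing through Yosida approximations $A_n$ of $A$ (where one obtains strong solutions for which the differentiation is rigorous and the dissipativity inequality $\langle A_n y, y\rangle \leqslant -\beta |y|_H^2$ holds for large $n$) and then letting $n\to\infty$, or by using the Itô-type formula for mild solutions available in this setting. Second, and more delicately, one must check that the particular topology of pseudo-weak convergence from Definition~\ref{d.pseudo-weak} is indeed compatible both with the additive shift by $W_{0,A,\sigma}(t)$ (so that pseudo-weak limits of $Z_{\alpha,t}^x + W_{0,A,\sigma}(t)$ equal $Z_t^x + W_{0,A,\sigma}(t)$) and with the lower-semicontinuous passage to the limit of the pointwise bound; this is the main place where the careful definition of pseudo-weak convergence is used, and is the principal obstacle to a fully routine proof.
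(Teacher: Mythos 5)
Your argument follows the paper's own route: the paper likewise obtains the uniform-in-$\alpha$ pathwise bound on $|Z^x_{\alpha,t}|_H$ by differentiating along the doubly regularized equation with the Yosida approximations $A_\lambda$ (Proposition~\ref{p.4.Z}, where a Gronwall-type lemma even gives an extra factor $\tfrac12$ on the integral term), and then invokes Proposition~\ref{prop:mr3.5} for pseudo-weak compactness and Corollary~\ref{corLimsup} to pass the almost-sure bound to the limit. The proposal is therefore correct and essentially the same as the paper's proof, including your suggested justification of the formal chain rule via $A_\lambda$.
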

In the next theorem we prove a Girsanov-type result with respect to the law of the
 Ornstein--Uhlenbeck process $W_{x, A, \sigma}$ defined by \eqref{eq:WxAsigma}.
 As we mentioned earlier, we can view this result as an analogue of using a Girsanov transformation to find a solution to stochastic differential equation where the reference process is the Ornstein--Uhlenbeck process.

\begin{thm}\label{t-sol-3}
Suppose Assumptions~\ref{A1}, \ref{A2}, \ref{A3}, \ref{A4} hold.
Then on any finite time interval $[0,T]$ and for any $x \in D_F$ the law of a pseudo-weak solution $X_t^x$ to Equation \eqref{e.2.1} is absolutely continuous with respect to the law of $W_{x, A, \sigma}$ on $L^2\left( [0,T], dt; H \right)$. In addition, the solution $X_t^x$ has $\mathbb{P}$-a.s continuous sample paths in $H$.
\end{thm}
\begin{rem}
One can expect that the corresponding Radon-Nikodym densities $\left\{ \rho^x \right\}_{x \in B_{r}}$ are uniformly integrable for any ball $B_{r}$, $r>0$.
Note that uniform integrability of the Radon-Nikodym densities $\left\{ \rho^x \right\}_{x \in B_{r}}$ in Theorem~\ref{t-sol-3} holds if and only if for any $r>0$ there exists an increasing unbounded function $\PSI:[0,\infty)\to[0,\infty)$ such that
$\sup_{x \in B_{r}}\mathbb{E}\rho^x\PSI\left(\rho^x\right) < \infty$.
In our paper we prove a weaker estimate
\begin{equation}\label{w-ent}
 \mathbb{E}\rho^x\PSI\left(\rho^x\right) < \infty.
\end{equation}
In Subsection~\ref{subsec-ex} we give quantitative estimates of $\PSI(\cdot)$ in terms of the function $a(\cdot)$ and the tail probability estimates of the Ornstein-Uhlenbeck process $W_{x, A, \sigma}$ under natural additional assumptions.

We stress that Theorem \ref{t-sol-3} is to the best of our knowledge new, even if $H=\mathbb R^d$.
\end{rem}

We prove Theorem~\ref{t-sol-2} and Theorem~\ref{t-sol-1} in Section~\ref{s.3}, where we provide more detailed statements as well. These results are illustrated by Examples~\ref{ex.3.4}, \ref{ex.3.5}, \ref{ex.3.6}. Theorem \ref{t-sol-3} is proved in Section \ref{s.4.3} and an example of how to construct $\PSI$ is given in Example~\ref{ex-psi}. Note that Theorem~\ref{t-sol-3} addresses the absolute continuity of the laws which is a long-standing question that has been implicitly stated in a number of publications such as \cite{Stannat1999a, Stannat1999b}.

\section{Preliminaries: Pseudo-weak convergence and Yosida approximations
}\label{s2}

\subsection{Pseudo-weak convergence}\label{A}
Let $\left( S, \mathcal{F}, \mu \right)$ be a $\sigma$-finite measure space, then for any $A \in \mathcal F$ we set
\[
\mu_A := \mathbbm{1}_A \mu.
\]
Let $H$ be a separable real Hilbert space with an inner product $\langle \cdot, \cdot \rangle$ and the corresponding norm $|\cdot|_{H}$.

\begin{notation} We need several spaces of $H$-valued functions on the measure space $\left( S, \mathcal{F}, \mu \right)$. By $L^{0}\left( S, \mu ; H \right)$ we denote the space of equivalence classes of $\mathcal{F}$-measurable function on $\left( S, \mathcal{F}, \mu \right)$ defined up to sets of $\mu$-measure zero and equipped with the topology of convergence in measure. By $L^{2}\left( S, \mu ; H \right)$ we denote the space of $H$-valued square-integrable functions on~$S$.
\end{notation}
In what follows we use $\xrightharpoonup[n \to \infty]{}$ for the weak convergence in Banach spaces.

\begin{df}\label{d.pseudo-weak} Suppose $f, f_{n} \in L^{0}\left( S, \mu; H \right)$, $n \in \mathbb{N}$. We say that $\left\{ f_{n}\right\}_{n=1}^{\infty}$ \emph{converges pseudo-weakly} to $f$, denoted by
\[
f_{n} \xrightharpoonup[n \to \infty]{\psi} f,
\]
if
\begin{equation}
\psi(f_n) \xrightharpoonup[n \to \infty]{} \psi(f) \text{ in } L^2\left( S, \mu_{A}; H \right) \text{ for any } A \in \mathcal{F} \text{ with } \mu(A)<\infty
\end{equation}
and for some $\psi: H \rightarrow H$ defined by
\begin{equation}\label{eq:mr3.1*}
\psi(h):=
\begin{cases}
\frac{h}{|h|_H} \, \psi_0(|h|_H), & \text{ if } h\not= 0, \\
0, & \text{ if } h=0,
\end{cases}
\end{equation}
where $\psi_0 : \mathbb R_+ \rightarrow \mathbb R_+$ is a strictly increasing continuous function such that $\psi_0(0)=0$. In this case we say that $f$ is a $\psi$-\emph{pseudo-weak limit}
of the sequence $\left\{ f_{n}\right\}_{n=1}^{\infty}$.
\end{df}

\begin{rem}\label{r.2.2} The definition of $\psi$-pseudo-weak limits depends on the choice of function $\psi$ which we usually fix. Typical examples for such a function $\psi$ are $\psi(h)=h$ or
\begin{equation}\label{e-psi0}
 \psi(h)=\frac{h}{1+|h|_H}, h \in H.
 \end{equation}
The latter choice corresponds to $\psi_0(r)=\dfrac{r}{1+r}$ for $r\in\mathbb R_+$.
For us the most interesting case is when $\psi$ is bounded, and most of the time we use $\psi$ defined by \eqref{e-psi0}, in which case we say \emph{pseudo-weak} convergence dropping the dependence on $\psi$.
\end{rem}
\begin{rem}\label{r.2.2-} The standard weak convergence coincides with $\psi$-pseudo-weak convergence for $\psi(h)=h$. For a space such as $H=L^2(\mathbb R^d)$, the converse can be proved as well: if the $\psi$-pseudo-weak convergence coincides with the usual weak convergence then $\psi(h)=h$ up to a multiplicative constant. We do not intend to study this question in detail in this paper, and only mention it in order to provide a better intuition for this notion of convergence.
\end{rem}

\begin{prop}\label{p.pseudo-weak}
Suppose $f, f_{n} \in L^0 \left( S, \mu ; H \right)$. Then for any bounded $\psi: H \longrightarrow H$ we have that $f_{n} \xrightharpoonup[n \to \infty]{\psi} f$
if and only	if
\begin{equation}
\int_A
	\left\langle \psi(f_{n}) -\psi(f), h \right\rangle_{}\,d\mu
	\xrightarrow[n \to \infty]{\hphantom{aaaaaaaa}}
	0
\end{equation}	
	for any $h\in H $ and any $A\in\mathcal F$ with $\mu(A)<\infty$.
\end{prop}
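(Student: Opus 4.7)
The plan is to verify the two implications separately, making essential use of the boundedness of $\psi$: if $M := \sup_{h \in H} |\psi(h)|_H < \infty$, then for any $A \in \mathcal{F}$ with $\mu(A) < \infty$ both $\psi(F_n)$ and $\psi(F)$ lie in $L^\infty(S; H, \mu_A) \subset L^2(S; H, \mu_A)$, with $\| \psi(F_n)\|_{L^2(S;H,\mu_A)}, \| \psi(F)\|_{L^2(S;H,\mu_A)} \leqslant M \sqrt{\mu(A)}$. In particular, $\psi(F_n) - \psi(F)$ is uniformly bounded in $L^2(S;H,\mu_A)$ by $2M\sqrt{\mu(A)}$, and all the integrals written below are well-defined.

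For the forward implication, fix $h \in H$ and $A \in \mathcal{F}$ with $\mu(A) < \infty$. The constant function $G := \mathbbm{1}_A \cdot h$ belongs to $L^2(S; H, \mu_A)$ since $\mu(A) < \infty$, so testing the weak convergence $\psi(F_n) \rightharpoonup \psi(F)$ in $L^2(S; H, \mu_A)$ against this particular $G$ yields exactly $\int_A \langle \psi(F_n) - \psi(F), h \rangle \, d\mu \to 0$, as required. For the reverse implication, fix again $A$ with $\mu(A) < \infty$ and let $G \in L^2(S; H, \mu_A)$ be arbitrary. By additivity of the integral, the hypothesis extends immediately to show convergence against any $H$-valued simple function $G_0 := \sum_{k=1}^N \mathbbm{1}_{B_k} h_k$ supported on $A$ (with $B_k \in \mathcal{F}$, $B_k \subset A$, $h_k \in H$). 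Such simple functions form a dense subset of $L^2(S; H, \mu_A)$ by separability of $H$ together with $\sigma$-finiteness of $\mu|_A$. A standard $3\varepsilon$-argument, using the uniform $L^2$-bound $\|\psi(F_n)-\psi(F)\|_{L^2(S;H,\mu_A)} \leqslant 2M\sqrt{\mu(A)}$ to control the difference on an $L^2$-approximation of $G$ by a simple $G_0$, then upgrades convergence on the dense subspace to convergence against all of $L^2(S; H, \mu_A)$.

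There is no real obstacle here: the argument is a textbook combination of the fact that weak convergence against a dense subspace of the dual plus a uniform norm bound implies full weak convergence. The only mild subtlety is ensuring that the dense class of $H$-valued simple functions is actually contained in $L^2(S;H,\mu_A)$, which is immediate since their essential ranges are finite and $\mu(A)<\infty$; after that the implication is routine.
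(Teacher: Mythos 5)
Your argument is correct: boundedness of $\psi$ gives the uniform bound $\|\psi(F_n)-\psi(F)\|_{L^2(S;H,\mu_A)}\leqslant 2M\sqrt{\mu(A)}$, testing against the constant function $\mathbbm{1}_A h\in L^2(S;H,\mu_A)$ gives the forward direction, and density of $H$-valued simple functions in $L^2(S;H,\mu_A)$ together with that uniform bound upgrades the integral condition to weak convergence for the converse. The paper states this proposition without proof, treating it as a routine reformulation of weak convergence in $L^2(S;H,\mu_A)$, and your write-up is precisely the standard justification one would supply; there is nothing in the paper's (omitted) argument that your proof misses.
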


\begin{rem}\label{r.2.2.} Observe that for a fixed $\psi$ the pseudo-weak limit is \emph{unique}, that is, if
 \begin{align*}
 & f_{n} \xrightharpoonup[n \to \infty]{\psi} f,
 \\
 & f_{n} \xrightharpoonup[n \to \infty]{\psi} g,
 \end{align*}
 then $f=g$ $\mu$-a.e.
\end{rem}

\begin{rem}\label{r.2.2..} Note that, the topology of $L^{0}\left( S, \mu ; H \right)$ defined by convergence in measure
 \[
 \lim_{n\to\infty}\mu\left( \big\{ |f_n-f|_{H}>\varepsilon\big\} \cap A \right)=0 \quad \text{ for all} \varepsilon>0,\; A\in \mathcal F,\; \mu(A)< \infty
 \]
implies pseudo-weak convergence, but these two types of convergence are not equivalent in general.
\end{rem}

The following assertion is an easy consequence of the Banach-Saks Theorem applied to the Hilbert space $L^2\left( S, \mu ; H \right)$ or, more elementarily, of Fatou's Lemma.

\begin{prop}\label{p.A3} Suppose $f, f_n \in L^2\left( S, \mu ; H \right)$, $n \in \mathbb N$, and
\begin{equation}\label{e.weakAssump}
 f_{n} \xrightharpoonup[n \to \infty]{} f,
\end{equation}
then
\[
\vert f \vert_{H} \leqslant \limsup_{n \to \infty} \vert f_{n} \vert_{H} \hskip0.2in \mu\text{-a.e.}
\]
\end{prop}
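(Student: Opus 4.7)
The plan is to convert the weak $L^2$-convergence hypothesis into pointwise $\mu$-a.e.\ information, which requires passing through strong convergence of convex combinations. The direct statement is a pointwise almost-everywhere inequality between the limit function and the pointwise $\limsup$ of the approximants, and weak convergence alone carries no such pointwise content; the standard device to bridge this gap is Mazur's lemma.

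First, I would apply Mazur's lemma to the weakly convergent sequence $\{F_n\}$ in the Hilbert (hence reflexive) space $L^2(S; H,\mu)$: there exist finite convex combinations
\[
G_m \;=\; \sum_{k=m}^{N_m} \lambda_{m,k} F_k, \qquad \lambda_{m,k}\geqslant 0, \quad \sum_{k=m}^{N_m}\lambda_{m,k}=1,
\]
such that $G_m \to F$ strongly in $L^2(S; H,\mu)$ as $m\to\infty$. By a standard subsequence extraction from $L^2$-convergence (the Riesz--Fischer theorem), along a subsequence $\{m_j\}$ I have $|G_{m_j}(s)-F(s)|_H \to 0$ for $\mu$-a.e.\ $s\in S$.

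Next, for each $j$ and $\mu$-a.e.\ $s$, the triangle inequality together with $\sum_k \lambda_{m_j,k}=1$ gives
\[
|G_{m_j}(s)|_H \;\leqslant\; \sum_{k=m_j}^{N_{m_j}} \lambda_{m_j,k}\,|F_k(s)|_H \;\leqslant\; \sup_{k\geqslant m_j}|F_k(s)|_H.
\]
Letting $j\to\infty$ on both sides and using continuity of the norm on the left and $m_j\to\infty$ on the right,
\[
|F(s)|_H \;\leqslant\; \lim_{j\to\infty}\sup_{k\geqslant m_j}|F_k(s)|_H \;=\; \limsup_{n\to\infty}|F_n(s)|_H \qquad \mu\text{-a.e.},
\]
which is the claim.

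The only real obstacle is recognizing that weak convergence provides no pointwise information by itself; once Mazur's lemma is invoked, the remainder is essentially a convexity and extraction argument. One should be slightly careful that the convex combinations provided by Mazur's lemma involve indices $k\geqslant m$ with $m\to\infty$, which is what ensures the $\sup_{k\geqslant m_j}$ on the right genuinely collapses to the $\limsup$.
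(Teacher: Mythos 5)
Your argument is correct. Mazur's lemma does give, for each $m$, a convex combination $G_m$ of the tail $\{F_k\}_{k\geqslant m}$ with $\Vert G_m - F\Vert_{L^2(S;H,\mu)}\leqslant 1/m$ (the tail still converges weakly to $F$, and the weak and norm closures of a convex set coincide), the a.e.\ convergent subsequence extraction is standard, and the convexity estimate $|G_{m_j}(s)|_H\leqslant \sup_{k\geqslant m_j}|F_k(s)|_H$ together with the monotone collapse of the tail suprema to the $\limsup$ finishes the proof; your caution about using indices $k\geqslant m$ is exactly the point that makes the last step work.

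This is, however, a different route from the proof the paper actually writes out, though it coincides in spirit with the alternative the paper only mentions in passing: the authors note the statement follows from the Banach--Saks theorem in $L^2(S;H,\mu)$ (Ces\`aro means of a subsequence converge strongly, which plays the same role as your Mazur convex combinations), but then deliberately give a more elementary proof. Their written argument uses separability of $H$ to pick a countable norming sequence $h_k$ with $|h|_H=\sup_k\langle h_k,h\rangle$, tests the weak convergence against $h_k f$ for bounded non-negative $f$ supported on sets of finite measure, and applies Fatou's lemma to get $\langle h_k, F(y)\rangle \leqslant \limsup_n |F_n(y)|_H$ $\mu$-a.e., from which the claim follows by taking the supremum over $k$. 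What each approach buys: yours converts weak convergence into genuine pointwise information quickly, at the cost of invoking Mazur/Banach--Saks and an a.e.\ subsequence extraction; the paper's proof avoids any convexity or reflexivity machinery and uses only scalar weak convergence plus Fatou, which is why the authors call it more elementary. Both are complete and correct.
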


\begin{cor}\label{corLimsup}
Let $f, f_n \in L^0\left( S, \mu; H \right)$, $n \in \mathbb N$, such that
\begin{equation*}
	f_{n} \xrightharpoonup[n \to \infty]{\psi} f.
\end{equation*}
Then
\begin{equation*}
|f|_H \leqslant \limsup\limits_{n \rightarrow \infty} |f_n|_H \quad \mu\text{-a.e.}
\end{equation*}
\end{cor}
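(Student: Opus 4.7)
The plan is to reduce the corollary to Proposition~\ref{p.A3} by passing through the auxiliary map $\psi$. Since, by the convention fixed in Remark~\ref{r.2.2}, pseudo-weak convergence refers to the bounded choice $\psi_0(r) = r/(1+|r|)$, the vector field $\psi$ defined in \eqref{eq:mr3.1*} is bounded on $H$. Hence for any $A \in \mathcal{F}$ with $\mu(A) < \infty$ the functions $\psi(F_n)$ and $\psi(F)$ automatically lie in $L^2(S;H, \mu_A)$, and by the definition of pseudo-weak convergence
\[
\psi(F_n) \xrightharpoonup[n \to \infty]{} \psi(F) \quad \text{in } L^2(S;H,\mu_A).
\]

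Next, I would apply Proposition~\ref{p.A3} to the sequence $\psi(F_n)$ with respect to the finite measure $\mu_A$, giving
\[
|\psi(F)|_H \leqslant \limsup_{n \to \infty}|\psi(F_n)|_H \qquad \mu_A\text{-a.e.}
\]
A direct inspection of \eqref{eq:mr3.1*} shows $|\psi(h)|_H = \psi_0(|h|_H)$ for every $h \in H$ (using $\psi_0(0) = 0$), so the inequality above becomes
\[
\psi_0\bigl(|F|_H\bigr) \leqslant \limsup_{n \to \infty} \psi_0\bigl(|F_n|_H\bigr) \qquad \mu_A\text{-a.e.}
\]

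Finally, I would exploit that $\psi_0|_{[0,\infty)}$ is continuous and strictly increasing, with inverse $\psi_0^{-1}$ continuous and strictly increasing on its range. Extending by the monotone limits $\psi_0(\infty) := \lim_{r\to\infty}\psi_0(r) \in (0,\infty]$ and $\psi_0^{-1}(\psi_0(\infty)) := \infty$, one has the general identity $\limsup_n \psi_0(a_n) = \psi_0(\limsup_n a_n)$ for any sequence $(a_n) \subset [0,\infty)$. Applying $\psi_0^{-1}$ to both sides of the displayed inequality then yields $|F|_H \leqslant \limsup_n |F_n|_H$ $\mu_A$-a.e. Since $\mu$ is $\sigma$-finite, exhausting $S$ by an increasing sequence of sets of finite measure promotes this to a $\mu$-a.e. statement, finishing the proof.

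The only subtle point I expect is the interchange of $\psi_0$ with $\limsup$ at infinity: when $\limsup_n |F_n|_H = +\infty$ the bound is vacuous, but one must be careful that the bounded range of $\psi_0$ does not obstruct inverting the inequality. This is handled cleanly by the above monotonicity/continuity argument together with the convention on $\psi_0^{-1}$ at the endpoint. Everything else is a routine transcription of Proposition~\ref{p.A3}.
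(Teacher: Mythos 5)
Your argument is correct and follows essentially the same route as the paper: apply Proposition~\ref{p.A3} to $\psi(F_n)$ with $\mu_A$ in place of $\mu$, use $|\psi(h)|_H=\psi_0(|h|_H)$ together with monotonicity and continuity of $\psi_0$ to pass the $\limsup$ through, invert $\psi_0$, and conclude by $\sigma$-finiteness. Your handling of the case $\limsup_n|F_n|_H=\infty$ (where the bound is vacuous) corresponds exactly to the paper's restriction to the set where this $\limsup$ is finite, so nothing is missing.
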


\begin{proof}
Let $A \in \mathcal F$, $\mu(A) < \infty$ and
$\psi$ as in Definition~\ref{d.pseudo-weak}. Then by Proposition~\ref{p.A3} applied to $\mu_A$ instead of $\mu$ we have that on the set
\[
\left\{ f \neq 0, \limsup_{n \to \infty} \vert f_{n} \vert_{H} < \infty \right\}
\]
we have
\begin{align*}
& 0 < \psi_0(|f|_H)=|\psi(f)|_H \leqslant \limsup\limits_{n \rightarrow \infty} |\psi(f_n)|_H
\\
&
= \limsup\limits_{n \rightarrow \infty} \psi_0(|f_n|_H) \leqslant \psi_0\left(\limsup\limits_{n \rightarrow \infty} |f_n|_H \right) \hskip0.2in \mu_{A}-a.e.
\end{align*}
Applying the inverse of $\psi_0$ to both sides of this inequality and using the fact that $\mu$ is $\sigma$-finite proves the desired result.
\end{proof}

\begin{prop}\label{prop:mr3.5}
	Let $\psi $ in Definition \ref{d.pseudo-weak} be bounded.
 If $f_{n} \in L^{0}\left( S, \mu; H \right)$, $n \in \mathbb{N}$, are such that
\[
\sup_{n \in \mathbb{N}} \vert f_{n} \vert_{H} < \infty \hskip0.2in \mu-a.e.,
\]
then there exists $f \in L^{0}\left( S, \mu; H \right)$ such that for some subsequence $\left\{ n_{k} \right\}_{k \in \mathbb{N}}$
\[
f_{n_{k}} \xrightharpoonup[k \to \infty]{\psi} f.
\]
\end{prop}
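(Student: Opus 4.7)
The plan is to pass the sequence through $\psi$, extract a weakly convergent subsequence using weak compactness in $L^2$ of a Hilbert space, and then invert $\psi$ to recover the pseudo-weak limit $F$.

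Working with the default $\psi_0(r)=r/(1+r)$ from Remark~\ref{r.2.2}, the map $\psi$ is bounded by $1$ on $H$. Hence for any $A\in\mathcal F$ with $\mu(A)<\infty$, the sequence $\{\psi(F_n)\}$ is bounded in the Hilbert space $L^2(S;H,\mu_A)$ by $\sqrt{\mu(A)}$. Using $\sigma$-finiteness, I fix an exhaustion $S=\bigcup_k A_k$ with $A_k\nearrow S$ and $\mu(A_k)<\infty$. On each $L^2(S;H,\mu_{A_k})$ the Banach--Alaoglu theorem supplies a weakly convergent subsequence; a standard diagonal extraction then yields a single subsequence, still denoted $\{n_k\}$, and a function $G\in L^0(S;H,\mu)$ (well defined by consistency of the weak limits on the nested $A_k$'s) such that
\[
\psi(F_{n_k})\xrightharpoonup[k\to\infty]{} G \quad\text{in }L^2(S;H,\mu_{A_m}) \text{ for every } m.
\]
To upgrade this to weak convergence in $L^2(S;H,\mu_A)$ for an arbitrary $A$ with $\mu(A)<\infty$, I split a test $h\in L^2(S;H,\mu_A)$ as $h\mathbbm 1_{A_m}+h\mathbbm 1_{A\setminus A_m}$: the first part is handled by the already established weak convergence on $A_m$, and the tail is controlled by Cauchy--Schwarz together with the uniform pointwise bound $|\psi(F_{n_k})|_H\leqslant 1$ and $|G|_H\leqslant 1$ (the latter from Proposition~\ref{p.A3}).

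Next, to exhibit $F$ with $\psi(F)=G$, I apply Proposition~\ref{p.A3} on each $\mu_{A_m}$ and use monotonicity and continuity of $\psi_0$ to get
\[
|G|_H \leqslant \limsup_{k\to\infty}|\psi(F_{n_k})|_H=\limsup_{k\to\infty}\psi_0(|F_{n_k}|_H)\leqslant \psi_0\Big(\sup_{n}|F_n|_H\Big)<1 \quad \mu\text{-a.e.},
\]
the last strict inequality being the point at which the hypothesis $\sup_n|F_n|_H<\infty$ $\mu$-a.e.\ is used. Therefore $|G|_H$ lies $\mu$-a.e.\ in the range $[0,1)$ of $\psi_0$, and I may define
\[
F:=\begin{cases}\dfrac{G}{|G|_H}\,\psi_0^{-1}(|G|_H), & G\neq 0,\\[4pt] 0, & G=0,\end{cases}
\]
which is $\mathcal F$-measurable and satisfies $\psi(F)=G$ by construction. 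Combined with the weak convergence $\psi(F_{n_k})\rightharpoonup \psi(F)$ in $L^2(S;H,\mu_A)$ for every finite-measure $A$, this proves $F_{n_k}\xrightharpoonup[k\to\infty]{\psi} F$.

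The main obstacle I anticipate is precisely the last step: ensuring that the weak $L^2$-limit $G$ falls in the image of the nonlinear map $\psi$ pointwise $\mu$-a.e., so that it is of the form $\psi(F)$ for an honest measurable $H$-valued $F$. This is why the hypothesis has to be $\sup_n|F_n|_H<\infty$ rather than only $|F_n|_H<\infty$ for each $n$, as it delivers the strict inequality $|G|_H<1$ needed to invert $\psi_0$. The diagonal/cut-off argument extending weak convergence from each $A_k$ to every finite-measure set is routine and only uses the boundedness of $\psi$.
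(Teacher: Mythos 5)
Your proposal is correct and follows essentially the same route as the paper: compose with the bounded map $\psi$, extract a weakly convergent subsequence in $L^2(S;H,\mu_A)$, use Proposition~\ref{p.A3} together with $\sup_n|F_n|_H<\infty$ to see that the limit $G$ satisfies $|G|_H<\sup\psi_0$ $\mu$-a.e., and then set $F:=\psi^{-1}(G)$. Your exhaustion-plus-diagonal extraction and cut-off argument is just a more explicit version of the paper's brief remark that the weak limit does not depend on the finite-measure set $A$.
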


\begin{proof}
Let $B_R(0)$ denote the open ball in $H$ with center $0$ and radius $0< R <\infty$.
Define $\psi^{-1}: B_{|\psi_0^{}|_\infty^{}} \longrightarrow H$ by
\begin{equation*}
\psi^{-1}(h):=
 \begin{cases}
 \frac{h}{|h|_H} \psi_0^{-1}(|h|_H), & \text{if} h\neq0,\\
 0, & \text{if} h=0,
 \end{cases}
\end{equation*}
where $\psi_0^{-1}$ is the inverse function of $\psi_0$. It is easy to see that $\psi^{-1}$ is indeed the inverse map of $\psi$ with $\psi$ defined by \eqref{eq:mr3.1*}. Now let $A\in \mathcal{F}$, $\mu(A)<\infty$, and
\begin{align*}
 V_n:=\psi(f_n), \quad n\in\mathbb{N}.
\end{align*}
Then $\left\{ V_n \right\}_{n\in\mathbb{N}}$ is bounded in $L^2\left( S, \mu_A; H \right)$. Hence there exists a $V_{A} \in L^2\left( S, \mu_A; H \right)$ such that for some subsequence $\left\{n_{k} \right\}_{k\in \mathbb{N}}$
\begin{align*}
V_{n_{k}}\xrightharpoonup[m\rightarrow\infty]{} V_A
\end{align*}
in $L^2\left( S,\mu_{A}; H \right)$.
Since $\mu$ is $\sigma$-finite, we can choose a sequence of subsets $A_l$ of finite $\mu$-measure such that $\cup A_l=S$, and by a diagonal argument we can construct $V \in L^2\left( S, \mu; H \right)$ and a subsequence, again indexed by $n_{k}, k\in \mathbb{N}$ such that
\begin{align*}
V_{n_{k}}\xrightharpoonup[k\rightarrow\infty]{} V.
\end{align*}
By Proposition~\ref{p.A3} we have that
\begin{align*}
|V|_H \leqslant \limsup_{k\rightarrow \infty} |V_{n_k}|_H = \limsup_{k\rightarrow \infty}\psi_0(|f_{n_k}|_H)
\leqslant \psi_0(\sup_{n\in\mathbb{N}}|f_n|_H) \ \mu\text{-a.e.}
\end{align*}
Therefore $V\in B_{|\psi_0^{}|_\infty^{}}(0)$ and
\begin{align*}
f:=\psi^{-1}(V)
\end{align*}
is well-defined. By definition of the $\psi$-pseudo-weak convergence
\begin{align*}
f_{n_{k}}\xrightharpoonup[ k\rightarrow\infty]{\psi}f.
\end{align*}
\end{proof}

\subsection{Yosida approximations to $A$}\label{ss.YosidaA}
We need the Yosida approximations $A_{\alpha}$ to $A$ for small $\alpha$, in particular, we will use the fact that such $A_{\alpha}$ satisfy Assumption~\ref{A1} with a change of $\beta$ as in Proposition~\ref{prop-beta-alpha}. Surprisingly, it is not easy to find a reference to this fact, so again we include it for completeness.

We start by recalling some standard facts about $C_{0}$-semigroups and their generators, most of this goes back to Hille and Yosida. We refer to \cite[Chapter II]{EngelNagelBook} for most of the material below. Let $\rho \left( A \right)$ be the resolvent set, then the resolvent of $A$ is defined as
\begin{align*}
& R_{\lambda} \left( A \right):= \left( \lambda I- A\right)^{-1}, \hskip0.1in \lambda \in \rho \left( A \right) \in B\left( H \right),
\\
& R_{\lambda} \left( A \right): H \longrightarrow D_{A}.
\end{align*}
Recall that for $\lambda >0$ we have $\Vert R_{\lambda} \left( A \right) \Vert \leqslant 1/\lambda$. In addition,
\begin{equation}\label{e.3.12}
\lambda R_{\lambda}\left( A \right)x \xrightarrow[\lambda \to \infty]{} x, \hskip0.1in x \in H.
\end{equation}
Note that $A R_{\lambda} \left( A \right)x= R_{\lambda} \left( A \right)Ax, \hskip0.1in x \in D_{A}$.
Finally, the Yosida approximations to $A$ are defined by
\begin{equation}
A_{\alpha}x:=\tfrac1\alpha A R_{\frac1\alpha} \left( A \right)x, x \in H.
\end{equation}
Since $(A,D_A)$ as a generator of a contractive $C_0$-semigroup is $m$-dissipative, $A_\alpha$ is a special case of $F_\alpha$ in \eqref{e.y.a}.
The Yosida approximations $A_\alpha$ to $A$ satisfy the following properties, see \cite[Proposition 7.2]{BrezisBook2011}, where
 \begin{equation}\label{eq-J-A}
 J_\alpha:=(I-\alpha A)^{-1},
\end{equation}
 $J_{\alpha} \in B\left( H \right)$, $\Vert J_\alpha\Vert \leqslant1$.
\begin{align}
& A_{\alpha}x \xrightarrow[\alpha \to 0]{} Ax, \hskip0.1in x \in D_{A}, \notag
\\
& | A_{\alpha}x|_H\leqslant|Ax|_H, \hskip0.1in x \in D_{A}, \label{e.3.10}
\\
& A_{\alpha}x=J_\alpha Ax, \hskip0.1in x \in D_{A}, \notag
\\
& A_{\alpha} \in B\left( H \right), \notag
\\
& \Vert A_{\alpha} \Vert \leqslant\tfrac1\alpha, \notag
\\
& A_{\alpha} =AJ_\alpha =\tfrac1\alpha (J_\alpha-I) . \notag
\end{align}

\begin{prop}\label{prop-beta-alpha}
Under Assumption \ref{A1} \[ \Vert J_\alpha \Vert \leqslant 1/({1+\alpha\beta}) \] and \[ \langle A_\alpha x, x \rangle \leqslant -\beta_\alpha \vert x \vert^{2}_{H}\]
for all $x\in H$, where
\[
\beta_\alpha:=\frac{\beta}{1+\alpha\beta}.
\]
\end{prop}
\begin{proof}
To prove the first inequality,
let $x\in H$ and $y:=J_\alpha x$, that is $x=y-\alpha A y$. Then
note that\[
|x|_H\cdot|y|_H\geqslant\langle x,y\rangle =\langle y-\alpha A y,y\rangle
\geqslant (1+\alpha\beta)|y|_H^2,
\]
which implies $|x| \geqslant (1+\alpha\beta)|y|$. To prove the second inequality, note that
\[
\langle -A_\alpha x, x \rangle = \langle -A y, y-\alpha A x\rangle
\geqslant
\beta|y|_H^2+\alpha|Ay|_H^2=\beta|y|_H^2+\frac1\alpha|x-y|_H^2 \geqslant \frac{\beta|x|_H^2}{1+\alpha\beta},
\]
where the last inequality is obtained by minimization over all $y\in H$.
\end{proof}
Note that the estimates in Proposition \ref{prop-beta-alpha} are best possible under Assumption \ref{A1}.

\subsection{Yosida approximations to $F$}\label{ss.Yosida}
Recall that to define pseudo-weak solutions in Definition~\ref{d-sol}, we used the Yosida approximation to $F$ satisfying Assumption~\ref{A3}. While there are standard references for this approximation such as \cite{BarbuBook1976, BarbuBook2010, BrezisBook1973}), and in the setting similar to the one considered in this paper in \cite{DaPratoRockner2002, DaPratoRocknerWang2009, WiesingerPhDThesis}, we include details for completeness: fix $t \in [0,\infty)$ and set $F :=F(t,\cdot)$. Then for any $\alpha > 0$ we define
\begin{equation}\label{e.y.a}
F_{\alpha}:=\frac{1}{\alpha}\left( J_{\alpha}\left( x \right) - x \right), x \in H,
\end{equation}
where
\[
J_{\alpha}\left( x \right):=\left( I -\alpha F\right)^{-1}\left( x \right), \ I\left( x \right)=x,
\]
which is a nonlinear generalization of $\eqref{eq-J-A}$.
Then each $F_{\alpha}$ is single-valued, dissipative, Lipschitz continuous with Lipschitz constant less than $\frac 2\alpha$ and satisfies
\begin{align}\label{e-F0}
& \lim_{\alpha \to 0} F_{\alpha}\left( x \right)=F_{0}\left( x \right), x \in D_F,
\\\label{e-F0-}
& \vert F_{\alpha}\left( x \right)\vert_{H} \leqslant \vert F_{0}\left( x \right)\vert_{H}, x \in D_F.
\end{align}
It is clear from the last inequality that for each $x_0 \in D_F$
\begin{equation}
|F_\alpha(t,x)|_{H} \le |F_0(t,x_0)|_{H}+ \frac 2\alpha|x|_{H} \le a(|x_0|_{H}) + \frac 2\alpha |x|_{H}, \quad x \in H.
\end{equation}

\section{Almost sure $\varphi$-type estimates of solutions $X_t$}
\label{s.3}

\subsection{Properties of function space $\mathcal{M}$} \label{s.Mspace}
Before proceeding to the proof of Theorem~\ref{t-sol-2} we need to establish properties of functions in $\mathcal{M}$ depending on the value of $L_{\varphi}$ as introduced in Definition \ref{df-phi}. In particular, we shall see that functions in $\mathcal{M}$ satisfy the standard condition in the de la Vall\'{e}e-Poussin Theorem. We also find sharp constants that might be useful for finding $\varphi$-moments depending on the growth of $F_0$ as measured by the radial function $a$ in Assumption~\ref{A4}.

\begin{lem}[]\label{l.3.1}
Suppose $\varphi \in \mathcal{M}$, then
\begin{itemize}
\item[(i)] for any $c>0$, $\beta >0$ and any $0<B<\beta L_{\varphi}$ there is a constant $C \geqslant 0$ such that
\[
\varphi\left( u \right)\left[ \frac{\varphi^{\prime}\left( u \right)}{\varphi\left( u \right)}\left( c\sqrt{u}-\beta u\right)+B \right] \leqslant C, \text{ for all} u \in (0, \infty ).
\]
The constant $C$ can be chosen as follows.
\begin{align}\label{e.3.8}
 & C \left( c, \beta, B \right):=\max\limits_{u\in \left[0, \infty \right)}\left( \varphi^{\prime}\left( u \right)\left( c\sqrt{u}-\beta u\right)+B\varphi\left( u \right) \right)=\\
 & \max\limits_{u\in \left[0, u_{0} \right]}\left( \varphi^{\prime}\left( u \right)\left( c\sqrt{u}-\beta u\right)+B\varphi\left( u \right) \right), \notag
\end{align}
where $u_{0}:=\max\left\{ \frac{c^{2}}{\beta^{2}}, \frac{c^{2}}{4\left(\beta - B \right)^{2}}\right\}$. In particular, for $B=\frac{\beta}{2}$
\[
C \left( c, \beta, \frac{\beta}{2} \right)=\frac{\beta}{2}\varphi\left( \frac{c^{2}}{\beta^{2}}\right).
\]

\item[(ii)] If $L_\varphi >1$, then
\end{itemize}
\[
\frac{\varphi\left( u \right)}{ u}\xrightarrow[u \to \infty]{} \infty.
\]
\end{lem}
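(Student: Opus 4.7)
The plan is to analyze the function $f(u) := \varphi'(u)\bigl(c\sqrt u - \beta u\bigr) + B\varphi(u)$ directly on $(0,\infty)$. Since $\varphi$ is $C^2$ on $(0,\infty)$ with $\varphi'\geqslant 0$ and $\varphi''\geqslant 0$ by convexity, $f$ is continuous and it suffices to exhibit a compact interval $[0,u_0]$ outside of which $f$ is nonincreasing. The whole statement then reduces to showing that the sign of $f'$ is controlled beyond an explicit threshold, and, for the sharp constant in the $B=\beta/2$ case, identifying the unique critical point.

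For part (i) I would simply compute
\[
f'(u) \;=\; \varphi''(u)\bigl(c\sqrt u - \beta u\bigr) \;+\; \varphi'(u)\Bigl(\tfrac{c}{2\sqrt u} - \beta + B\Bigr).
\]
Using $\varphi''\geqslant 0$, the first summand is $\leqslant 0$ as soon as $c\sqrt u -\beta u\leqslant 0$, i.e.\ $u\geqslant c^2/\beta^2$. Using $\varphi'\geqslant 0$, the second summand is $\leqslant 0$ as soon as $c/(2\sqrt u)\leqslant \beta-B$, i.e.\ $u\geqslant c^2/(4(\beta-B)^2)$; this is the origin of the two terms in the definition of $u_0$. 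Hence $f'\leqslant 0$ on $[u_0,\infty)$, and the supremum of $f$ over $[0,\infty)$ coincides with its maximum over the compact set $[0,u_0]$, yielding the constant $C(c,\beta,B)$. In the special case $B=\beta/2$ one has $u_0=c^2/\beta^2$, and a common factor $(c-\beta\sqrt u)$ can be pulled out of $f'$:
\[
f'(u) \;=\; (c - \beta\sqrt u)\,\frac{2u\varphi''(u)+\varphi'(u)}{2\sqrt u},
\]
so $f$ is increasing on $[0,c^2/\beta^2]$ and decreasing afterwards, and the maximum is attained exactly at $u=c^2/\beta^2$, where $c\sqrt u-\beta u=0$ and $f=B\varphi(c^2/\beta^2)=\frac{\beta}{2}\varphi(c^2/\beta^2)$. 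Note that the derivative recipe above works directly when $B<\beta$; for the remaining range $\beta\leqslant B<\beta L_{\varphi}$ (which forces $L_{\varphi}>1$) I would instead use the asymptotic $u\varphi'(u)/\varphi(u)\to L_{\varphi}$ to pick $r<1$ with $B\varphi(u)\leqslant r\beta u\varphi'(u)$ for large $u$, obtaining $f(u)\leqslant \varphi'(u)\bigl(c\sqrt u-(1-r)\beta u\bigr)$, which is negative for $u$ large; this recovers boundedness at the cost of a larger $u_0$.

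For part (ii), assuming $L_\varphi>1$ pick any $L'\in(1,L_\varphi)$. By definition of $L_\varphi$ there is $u_1>0$ such that $u\varphi'(u)/\varphi(u)\geqslant L'$ for all $u\geqslant u_1$, i.e.\ $(\log\varphi)'(u)\geqslant L'/u$. Integrating from $u_1$ to $u$ gives $\varphi(u)\geqslant \varphi(u_1)(u/u_1)^{L'}$ for $u\geqslant u_1$. Dividing by $u$ and letting $u\to\infty$ yields $\varphi(u)/u\geqslant \bigl(\varphi(u_1)/u_1^{L'}\bigr)u^{L'-1}\to\infty$ since $L'>1$.

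The main obstacle is the bookkeeping for the threshold $u_0$: one must be careful that both summands in $f'$ become nonpositive simultaneously, which is exactly why the two quantities $c^2/\beta^2$ and $c^2/(4(\beta-B)^2)$ appear under the maximum, and why the regime $B\geqslant \beta$ genuinely needs the asymptotic hypothesis rather than only convexity. Everything else is elementary continuity and elementary integration of a logarithmic derivative.
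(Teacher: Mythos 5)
Your proof is correct and follows essentially the same route as the paper: for (i) the identical derivative computation $f'(u)=\varphi''(u)(c\sqrt u-\beta u)+\varphi'(u)\bigl(\tfrac{c}{2\sqrt u}-(\beta-B)\bigr)$ with the same threshold $u_0$ and the same evaluation at $u_0=c^2/\beta^2$ when $B=\beta/2$, and for (ii) an integration of the logarithmic derivative equivalent to the paper's argument with $H(u)=\varphi(u)/u$. Your explicit treatment of the regime $\beta\leqslant B<\beta L_{\varphi}$ via $u\varphi'(u)/\varphi(u)\to L_{\varphi}$ is a welcome clarification, but it is in substance the same limit argument the paper uses to establish the existence of $C$ before deriving the explicit $u_0$ (which, as you note, requires $B<\beta$).
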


\begin{proof}[Proof] (ii):
Define $H:(0, \infty) \longrightarrow (0, \infty)$ by $H\left( u \right):=\frac{\varphi\left( u \right)}{u}$. Then

\begin{align}\label{e.3.1}
& H^{\prime}\left( u \right)=\left( \frac{\varphi\left( u \right)}{u}\right)^{\prime}=
\left(\frac{u\varphi^{\prime}\left( u \right)}{\varphi\left( u \right)}-1\right)\cdot \frac{\varphi\left( u \right)}{u^{2}}
=
\left(\frac{u\varphi^{\prime}\left( u \right)}{\varphi\left( u \right)}-1\right)\cdot \frac{H\left( u \right)}{u}.
\end{align}
Using the assumption that $L_{\varphi}>1$ we see that there exists a $K>0$ such that
\[
H^{\prime}\left( u \right)= \left( \frac{\varphi\left( u \right)}{u}\right)^{\prime} > K \frac{H\left( u \right)}{u} >0
\]
for all large enough $u$. Thus
\[
\frac{H^{\prime}\left( u \right)}{H\left( u \right)}> \frac{K}{u}
\]
for such a $u$. Then for some $M>0$
\[
H\left( u \right)=\frac{\varphi\left( u \right)}{u}> Mu^K \text{ for all large enough} u,
\]
which implies that $\frac{\varphi\left( u \right)}{ u}\xrightarrow[u \to \infty]{} \infty$.

(i): It is enough to check that for $B \in (0, \beta L_\varphi)$
\[
\varphi\left( u \right)\left[ \frac{\varphi^{\prime}\left( u \right)}{\varphi\left( u \right)}\left( c\sqrt{u}-\beta u\right)+B \right]\xrightarrow[u \to \infty]{} -\infty,
\]
and so there is a $u_{0}>0$ such that
\[
\varphi^{\prime}\left( u \right)\left( c\sqrt{u}-\beta u\right)+B\varphi\left( u \right)<0 \text{ for all} u>u_{0}.
\]
Then we can choose
\begin{align}\label{eq:mr4.6'}
& C:= \max\limits_{u \in [0, u_{0}]}\left( \varphi^{\prime}\left( u \right)\left( c\sqrt{u}-\beta u\right)+B\varphi\left( u \right) \right) (>0).
\end{align}
Observe that
\[
\frac{\varphi^{\prime}\left( u \right)}{\varphi\left( u \right)}\left( c\sqrt{u}-\beta u\right)=\frac{u\varphi^{\prime}\left( u \right)}{\varphi\left( u \right)}\left( \frac{c}{\sqrt{u}}-\beta \right)
\xrightarrow[u \to \infty]{} -\beta L_{\varphi} \quad \big(:= -\infty, \text{ if} L_\varphi = \infty \big),
\]
and so
\[
\varphi\left( u \right)\left[ \frac{\varphi^{\prime}\left( u \right)}{\varphi\left( u \right)}\left( c\sqrt{u}-\beta u\right)+B \right] \xrightarrow[u \to \infty]{} -\infty.
\]
Recall that we can take $C$ to be the maximum of the following function
\[
f\left( u \right):=\varphi^{\prime}\left( u \right)\left( c\sqrt{u}-\beta u\right)+B\varphi\left( u \right).
\]
First we take the derivative of this function
\begin{align*}
& f^{\prime}\left( u \right)=\varphi^{\prime \prime}\left( u \right)\left( c\sqrt{u}-\beta u\right)+\varphi^{\prime}\left( u \right)\left( \frac{c}{2\sqrt{u}}-\beta \right)+B\varphi^{\prime}\left( u \right)=
\\
& \varphi^{\prime \prime}\left( u \right)\sqrt{u}\left( c-\beta \sqrt{u}\right)+\varphi^{\prime}\left( u \right)\left( \frac{c}{2\sqrt{u}}-\left(\beta-B\right) \right).
\end{align*}
By assumption $\varphi$ is an increasing convex function, and therefore $\varphi^{\prime \prime}$ and $\varphi^{\prime}$ are non-negative, so, since $\beta -B >0$, $f^{\prime}\left( u \right) \leqslant 0$ for any $u \geqslant u_{0}=\max\left\{ \frac{c^{2}}{\beta^{2}}, \frac{c^{2}}{4\left(\beta - B \right)^{2}}\right\}$. Therefore we can choose
\[
C\left( c, \beta, B \right)=\max\limits_{u\in [0, \infty )} f\left( u \right)=\max\limits_{u\in [0, u_{0}]} f\left( u \right).
\]
Finally, if $B=\beta/2$, then $u_{0}=\frac{c^{2}}{\beta^{2}}$, and $f^\prime(u)\ge 0$ on $[0,u_0]$, so
\[
C\left( c, \beta, \beta/2 \right)=f\left( u_{0} \right)=\frac{\beta}{2}\varphi\left( \frac{c^{2}}{\beta^{2}}\right).
\]
\end{proof}
We illustrate properties of functions in $\mathcal{M}$ by considering several fundamental examples.

\begin{ex}\label{ex.3.4}[Polynomial functions] Suppose $\varphi\left( u \right)=u^{p}, p \geqslant 1$, then $\varphi \in \mathcal{M}$. In this case $L_{\varphi}=p$. To see how we can find $C$ in \eqref{eq:mr4.6'}, observe that for any $0 < B < p \beta$

\begin{align*}
& f\left( u \right):=\varphi\left( u \right)\left[\frac{\varphi^{\prime}\left( u \right)}{\varphi\left( u \right)}\left( c\sqrt{u}-\beta u\right)+B\right]=
\\
& cpu^{p-1/2}+\left(B-p\beta\right)u^{p},
\end{align*}
for which

\begin{align*}
& f^{\prime}\left( u \right)=cp\left(p-\frac{1}{2}\right)u^{p-3/2}+\left(B-p\beta\right)pu^{p-1}=
\\
& p u^{p-3/2}\left( c\left(p-\frac{1}{2}\right)-\left(p\beta - B\right)\sqrt{u}\right).
\end{align*}
Then the maximum of $f$ is attained at $u_{0}=\left(\frac{c\left(p-\frac{1}{2}\right)}{p\beta - B}\right)^{2}$.
Therefore

\begin{align*}
& C_{p}\left( c, \beta, B \right):=\frac{c}{2}\left(\frac{c\left(p-\frac{1}{2}\right)}{p\beta - B}\right)^{2p-1}=\frac{c^{2p}}{2}\left(\frac{\left(p-\frac{1}{2}\right)}{p\beta - B}\right)^{2p-1}.
\end{align*}
In this example $L_{\varphi}=p$, and so by Lemma~\ref{l.3.1} for any $0 < B < \beta$ we can choose

\[
C_{1}\left( c, \beta, B \right):=\frac{c^{2}}{4\left( \beta - B \right)}.
\]
\end{ex}

\begin{ex}\label{ex.3.5}[Exponential] Suppose $\varphi\left( u \right)=e^{u}$, then $\varphi \in \mathcal{M}$. In this case $L_{\varphi}=\infty$, so we can take any positive constant $B$. For example, if $B=\beta/2$, then for

\[
f\left( u \right):=e^{u}\left[c\sqrt{u}-\beta u+B\right]=e^{u}\left[c\sqrt{u}-\beta u+ \frac{\beta}{2}\right]
\]
we have

\[
f^{\prime}\left( u \right)=e^{u}\left[c\sqrt{u} + \frac{c}{2\sqrt{u}}-\beta u - \frac{\beta}{2}\right]=e^{u}\left( \frac{c}{\sqrt{u}} - \beta \right)\left( u + \frac{1}{2} \right)
\]
and we can take

\[
C=f\left( \frac{c^{2}}{\beta^{2}} \right)= \frac{ \beta}{2}e^{\frac{c^{2}}{\beta^{2}}}.
\]
\end{ex}

\begin{ex}\label{ex.3.6} Suppose $\varphi\left( u \right)=u\ln \left( u + 1 \right)$, then $\varphi \in \mathcal{M}$. In this case $L_{\varphi}=1$, so we can take any $0 < B < \beta$ and then $C$ can be chosen by finding the maximum of the function

\begin{align*}
&f\left( u \right):=\varphi\left( u \right)\left[\frac{\varphi^{\prime}\left( u \right)}{\varphi\left( u \right)}\left( c\sqrt{u}-\beta u\right)+B\right]=
\\
& \ln\left( u+1 \right)\left( c\sqrt{u}-\left(\beta-B\right)u\right)+\frac{u}{u+1}\left( c\sqrt{u}-\beta u\right).
\end{align*}
Note that for $u>\left( \frac{c}{\beta-B} \right)^{2}$ the function $f\left( u \right)$ is negative. Therefore it is enough to find the maximum of $f$ on $\left( 0, \left( \frac{c}{\beta-B} \right)^{2} \right)$. We will use a rough estimate for $u \in \left( 0, \left( \frac{c}{\beta-B} \right)^{2} \right)$

\begin{align*}
&\ln\left( u+1 \right)\left( c\sqrt{u}-\left(\beta-B\right)u\right)+\frac{u}{u+1}\left( c\sqrt{u}-\beta u\right) \leqslant
\\
& \frac{c^{2}}{4\left(\beta-B\right)}\ln\left( u+1 \right)+\frac{c^{2}}{4\beta}\frac{u}{u+1}\leqslant
\\
& \frac{c^{2}}{4\left(\beta-B\right)}u+\frac{c^{2}}{4\beta} \leqslant \frac{c^{2}}{4\left(\beta-B\right)}\left( \frac{c}{\beta-B} \right)^{2}+\frac{c^{2}}{4\beta}.
\end{align*}
Thus we can take

\[
C\left( c, \beta, B \right):=\frac{c^{2}}{4}
\left(
\frac{c^{2}}{\left(\beta-B\right)^{3}}+\frac{1}{\beta}
\right).
\]
\end{ex}

\subsection{Proof of Theorem~\ref{t-sol-2}}
Recall now that the Ornstein-Uhlenbeck process $W_{x, A, \sigma}\left( t \right)$ defined by \eqref{eq:WxAsigma} is a Gaussian random variable with values in $H$ with mean $0$ and the covariance operator $Q_{t}$ given by
\[
Q_{t}x=\int_{0}^{t}e^{sA}\sigma^{2}e^{sA^{\ast}}x ds.
\]
We will use the following notation for the maximum process
\begin{equation}\label{e.3.9}
W_{x, A, \sigma}^{\ast}\left( t \right):=\sup_{s \in [0, t]} \vert W_{x, A, \sigma}\left( s \right)\vert_{H}.
\end{equation}

\begin{prop} For any $t>0$
there is an
$\varepsilon>0$ such that
\begin{equation}\label{e.3.2}
\mathbb{E} \left( e^{\varepsilon \, [W_{0, A, \sigma}^{\ast}\left( t \right)]^2} \right)<\infty.
\end{equation}
\end{prop}
\begin{proof}
This follows from Fernique's Theorem \cite{Fernique1975a}, see also
\cite[Theorem 3.1]{KuoLNM1975},
if one can show that the law of $W_{0, A, \sigma}$ is a Gaussian measure on $C([0,t];H)$.
In the case $A$ is self-adjoint, this follows from
\cite[Proposition I.0.7]{LiuRocknerBook2015}.
When $A$ is not necessarily self-adjoint
the same proof works using \cite[Theorem 2.9]{DaPratoBook2004}.	
\end{proof}

\begin{nota}\label{n.3.8} For any $\varphi \in \mathcal{M}$ and for all $t>0$ we denote the following random functions by
\begin{align*}
& K_{\varphi, \beta, a}\left( t \right):= \varphi\left( \frac{2\left[ a \left( W_{0, A, \sigma}^{\ast}\left( t \right)\right) \right]^{2}}{\beta^{2}}\right),
\\
& K_{\varphi}\left( t \right):=\varphi\left(2 \left| W_{0, A, \sigma} \left( t \right) \right|^{2}_{H} \right)
\end{align*}
Note that these functions are finite a.s.
\end{nota}

\begin{rem} We will make use of the following elementary inequalities: for any $a, b \geqslant 0$, and $p \geqslant 1$
 \begin{align}
 & \left(a + b\right)^{p} \leqslant 2^{p-1}\left(a^{p}+b^{p}\right), \label{e.2.7}
 \\
 & e^{\left( a+b\right)^{2}}\leqslant \frac{e^{4a^{2}}}{2}+\frac{e^{4b^{2}}}{2}.\notag
 \end{align}
\end{rem}
We are now in position to prove pathwise estimates in Theorem~\ref{t-sol-2}.
\begin{proof}[Proof of Theorem~\ref{t-sol-2}]
	\def\alambda{{\ensuremath\alpha^{\prime}}}
	\def\abeta{{\ensuremath\beta_\alambda}}
Suppose $\alambda>0$, $\alpha>0$ and $Z_{\alambda,\alpha, t}^{x}$ is a solution to
\begin{align}\label{mr4.10.2}
\begin{split}
dZ_{\alambda, \alpha, t}^x&=\left( A_\alambda Z_{\alambda, \alpha, t}^x+F_\alpha(t, Z_{\alambda, \alpha, t}^x + W_{0, A, \sigma}\left( t \right) \right)dt,\\
Z_{\alambda,\alpha,0}^x&=x.
\end{split}
\end{align}
Note that coefficients in \eqref{mr4.10.2} are Lipschitz, and therefore the solution exists and it is unique, and in addition the solution is continuous in $t$. Then for Lebesgue measure-a.e. $t>0$ and a $C^{1}$	function $\varphi: [0, \infty) \longrightarrow (0, \infty)$
\begin{align*}
\frac{d}{dt} &\varphi\left(\vert Z_{\alambda,\alpha,t}^{x} \vert^{2}_{H}\right)\\
&=2\varphi^{\prime}\left(\vert Z_{\alambda,\alpha,t}^{x} \vert^{2}_{H}\right)\langle \left( Z_{\alambda,\alpha,t}^{x} \right)^{\prime}, Z_{\alambda,\alpha,t}^{x} \rangle\\
&=2\varphi^{\prime}\left(\vert Z_{\alambda,\alpha,t}^{x} \vert^{2}_{H}\right)\langle {{{A_{\alambda}}}} Z_{\alambda,\alpha,t}^{x}+F_{\alpha}\left( t, Z_{\alambda,\alpha,t}^{x}+W_{0, A, \sigma}\left( t \right)\right), Z_{\alambda,\alpha,t}^{x} \rangle
\\
&= 2\varphi^{\prime}\left(\vert Z_{\alambda,\alpha,t}^{x} \vert^{2}_{H}\right)\langle {{{A_{\alambda}}}} Z_{\alambda,\alpha,t}^{x}+\left( F_{\alpha}\left( t, Z_{\alambda,\alpha,t}^{x}+W_{0, {{{A_{\alpha}}}}, \sigma}\left( t \right)\right)-F_{\alpha}\left( t, W_{0, A, \sigma}\left( t \right)\right)\right), Z_{\alambda,\alpha,t}^{x} \rangle
 \\
& \qquad + 2\varphi^{\prime}\left(\vert Z_{\alambda,\alpha,t}^{x} \vert^{2}_{H}\right)\langle F_{\alpha}\left( t, W_{0, A, \sigma}\left( t \right)\right), Z_{\alambda,\alpha,t}^{x} \rangle
\\
&\leqslant 2\varphi^{\prime}\left(\vert Z_{\alambda,\alpha,t}^{x} \vert^{2}_{H}\right)\langle {{{A_{\alambda}}}} Z_{\alambda,\alpha,t}^{x}, Z_{\alambda,\alpha,t}^{x} \rangle + 2\varphi^{\prime}\left(\vert Z_{\alambda,\alpha,t}^{x} \vert^{2}_{H}\right)\langle F_{\alpha}\left( t, W_{0, A, \sigma}\left( t \right)\right), Z_{\alambda,\alpha,t}^{x} \rangle\\
& \leqslant 2\varphi^{\prime}\left(\vert Z_{\alambda,\alpha,t}^{x} \vert^{2}_{H}\right) \left( -\abeta \vert Z_{\alambda,\alpha,t}^{x} \vert^{2}_{H}+ \vert Z_{\alambda,\alpha,t}^{x} \vert_{H} \, \vert F_{\alpha}\left( t, W_{0, A, \sigma}\left( t \right)\right) \vert_{H} \right)\\
& \leqslant 2\varphi^{\prime}\left(\vert Z_{\alambda,\alpha,t}^{x} \vert^{2}_{H}\right) \left(a \left( \vert W_{0, A, \sigma} \left( t \right) \vert_{H} \right) \vert Z_{\alambda,\alpha,t}^{x} \vert_{H} -\abeta \vert Z_{\alambda,\alpha,t}^{x} \vert^{2}_{H} \right),
\end{align*}
where we used Proposition~\ref{prop-beta-alpha}, Equation \eqref{e-F0-}, Assumption~\ref{A3} and Equation \eqref{e.3.7}. We mention that $F_\alpha$ is monotone, see for instance \cite[Appendix A]{CerraiLNM1762}.

Now suppose $\varphi \in \mathcal{M}$, then by Lemma~\ref{l.3.1} taking $B=\abeta/2$ and
\[
C:=C\left( a \left( |W_{0, A, \sigma}\left( t \right)|_{H} \right), \abeta, \frac{\abeta}{2} \right)=\frac{\abeta}{2}\varphi\left( \frac{\left[ a \left( |W_{0, A, \sigma}\left( t \right)|_{H} \right) \right]^{2}}{\abeta^{2}}\right),
\]
we obtain that for all $u \in (0, \infty)$
\[
\varphi^{\prime}\left( u \right)\left( a \left( |W_{0, A, \sigma}\left( t \right)|_{H} \right) \sqrt{u}-\abeta u\right) \leqslant C-\frac{\abeta}{2}\varphi\left( u \right),
\]
therefore
\[
\frac{d}{dt}\varphi\left(\vert Z_{\alambda, \alpha, t}^{x} \vert^{2}_{H}\right) \leqslant \abeta\left( \varphi\left( \frac{\left[ a \left( |W_{0, A, \sigma}\left( t \right)|_{H} \right) \right]^{2}}{\beta_\alambda^{2}}\right) - \varphi\left(\vert Z^{x}_{\alambda, \alpha, t} \vert^{2}_{H}\right)\right).
\]
Now by Gronwall's inequality we see that for all $t \geqslant 0$
\begin{equation}\label{e.Z.est}
\varphi\left(\vert Z_{\alambda, \alpha, t}^{x} \vert^{2}_{H}\right) \leqslant \varphi\left( \vert x \vert^{2}_{H}\right)e^{-\abeta t}+\abeta \int_{0}^{t}e^{-\abeta \left( t-s \right)}\varphi\left( \frac{\left[ a \left( |W_{0, A, \sigma}\left( s \right)|_{H} \right) \right]^{2}}{\beta_\alambda^{2}}\right) ds.
\end{equation}
Similarly to the proofs of \cite[Lemma 1.2.3, Lemma 1.3.1 and Appendix A]{CerraiLNM1762} one can show that $Z_{\alambda,\alpha,t}^x \longrightarrow Z_{\alpha,t}^x$ locally uniformly in $t\in[0,\infty)$ for $\mathbb{P}$-a.e. $\omega\in\Omega$. So, since $\varphi$ is continuous \eqref{e.Z.est} holds for $Z_{\alpha,t}^x$ replacing $Z_{\alambda,\alpha,t}^x$.

Now we can use \eqref{e.2.7} and the fact that $\varphi$ is convex to see that for the solution $X_{\alpha, t}^{x}$ to \eqref{e.3.6} we have

\begin{align*}
\varphi &\left( \vert X_{\alpha, t}^{x} \vert^{2}_{H}\right)
\\
&\leqslant \frac{1}{2}\varphi\left(2\vert Z_{\alpha, t}^{x} \vert^{2}_{H} \right)+\frac{1}{2}\varphi\left(2 \vert W_{0, A, \sigma}\left( t \right) \vert^{2}_{H} \right)
\\
&\leqslant \frac{1}{2}\varphi\left(2 \vert x \vert^{2}_{H} \right)e^{-\beta t}+ \frac{\beta}{2}\int_{0}^{t}e^{-\beta \left( t-s \right)}\varphi\left( \frac{2 \left[ a \left( |W_{0, A, \sigma}\left( s \right)|_{H} \right) \right]^{2}}{\beta^{2}}\right) ds +\frac{1}{2}\varphi\left(2 \vert W_{0, A, \sigma}\left( t \right) \vert^{2}_{H} \right)
\\
& \leqslant \frac{1}{2}\varphi\left(2 \vert x \vert^{2}_{H} \right)e^{-\beta t} +\frac{1}{2}\varphi\left(2 \vert W_{0, A, \sigma}\left( t \right) \vert^{2}_{H} \right) + \varphi\left( \frac{2\left[ a \left( W^{\ast}_{0, A, \sigma}\left( t \right) \right) \right]^{2}}{\beta^{2}}\right) \frac{\beta}{2}\int_{0}^{t}e^{-\beta \left( t-s \right)} ds
\\
& \leqslant \frac{e^{-\beta t}}{2}\varphi\left(2 \vert x \vert^{2}_{H} \right) +\frac{1}{2}\varphi\left(2 \vert W_{0, A, \sigma} \left( t \right) \vert^{2}_{H} \right) +
 \frac{\beta t}{2} \varphi\left( \frac{2 \left[ a \left( W^{\ast}_{0, A, \sigma}\left( t \right) \right) \right]^{2}}{\beta^{2}}\right).
\end{align*}
Here we replaced $\varphi\left( \cdot \right)$ by $\varphi\left( 2~\cdot \right)$ which is again in $\mathcal{M}$.

Thus we have an estimate for $\varphi \left( \vert X_{\alpha, t}^{x} \vert^{2}_{H}\right)$ which is uniform in $\alpha$, so we can apply $\varphi^{-1}$ to the above inequality and use Corollary~\ref{corLimsup} with $\left( S, \mu \right)=\left([0, \infty) \times \Omega, dt\times\mathbb{P} \right)$ to pass to the limit as $\alpha\rightarrow 0$ along a subsequence. Then we apply $\varphi$ to the resulting inequality to obtain \eqref{e.4.9}.
\end{proof}

\subsection{Further \emph{a priori} pathwise estimates of $X_t$ and proof of Theorem~\ref{t-sol-1}}\label{p-sol}

Below we prove more bounds on $X_{t}$ which in particular imply Theorem~\ref{t-sol-1}. Thus we work in the setting of Theorem~\ref{t-sol-1}, and in particular we assume that Assumptions~\ref{A1}--\ref{A4} hold.

\begin{prop}\label{p.4.Z}
Let $Z_{\alpha, t}^{x}$ be a solution to the regularized equation \eqref{e.3.5}. Suppose $Z_{t}^{x}$ is a pseudo weak limit point of $Z_{\alpha, t}^{x}, \alpha\to 0$.
Then almost surely for all $\alpha > 0$
\begin{equation}\label{e|Z|-}
\vert Z^x_{\alpha,t} \vert_{H}
 \leqslant
 \vert x \vert_{H}
 e^{-\beta t}+\frac{1}{2}\int_{0}^{t}e^{-\beta\left( t-s \right)}
 a\left(\vert W_{0, A, \sigma}\left( s \right)\vert_{H} \right) ds =: Z_t^{*,x}
\end{equation}
for all $t \geqslant 0$ and thus
\begin{align}\label{eq:mr5.2}
\vert Z^x \vert_{H} \leqslant Z^{*,x} \qquad (dt\times\mathbb{P})\text{-a.s.}
\end{align}
\end{prop}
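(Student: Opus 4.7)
The plan is to first derive the estimate \eqref{e|Z|-} for the doubly Yosida-approximated processes $Z_{\lambda,\alpha,t}^x$ introduced in \eqref{mr4.10.2}, which are genuine $C^1$ solutions since $A_\lambda$ is bounded, then pass $\lambda \to \infty$ using the locally uniform convergence $Z_{\lambda,\alpha,\cdot}^x \to Z_{\alpha,\cdot}^x$ already recalled in the proof of Theorem~\ref{t-sol-2}, and finally transfer the bound to the pseudo-weak limit $Z_t^x$ via Corollary~\ref{corLimsup}.

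For the a priori estimate at the $(\lambda,\alpha)$-level, I would differentiate $|Z_{\lambda,\alpha,t}^x|_H^2$ in time. The $A_\lambda$-term contributes the dissipation $-2\beta|Z_{\lambda,\alpha,t}^x|_H^2$ by the bound recorded in \eqref{eq:3.10'} (uniformly for $\lambda$ large enough). For the $F_\alpha$-term, I would use the standard monotone decomposition
\[
\langle F_\alpha(t, Z+W), Z\rangle = \langle F_\alpha(t, Z+W) - F_\alpha(t, W), Z\rangle + \langle F_\alpha(t, W), Z\rangle,
\]
where the first summand is non-positive by the $m$-dissipativity of $F_\alpha$ inherited from $F$ through the Yosida construction, and the second is bounded by Cauchy--Schwarz together with \eqref{e-F0-} and Assumption~\ref{A4} by $a(|W_{0,A,\sigma}(t)|_H)\,|Z_{\lambda,\alpha,t}^x|_H$. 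Combining these,
\[
\tfrac{d}{dt}|Z_{\lambda,\alpha,t}^x|_H^2 \leqslant -2\beta|Z_{\lambda,\alpha,t}^x|_H^2 + 2\, a(|W_{0,A,\sigma}(t)|_H)\,|Z_{\lambda,\alpha,t}^x|_H.
\]

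From this quadratic inequality I would pass to a linear inequality for $|Z_{\lambda,\alpha,t}^x|_H$ itself by dividing by $2|Z|_H$ on $\{|Z|_H>0\}$, the non-differentiability of $|\cdot|_H$ at zero being handled by the standard regularization $\sqrt{|Z|_H^2+\varepsilon}$ and letting $\varepsilon \downarrow 0$. Variation of parameters then yields the desired inequality (up to the numerical constant in front of the integral) at the level of $Z_{\lambda,\alpha,t}^x$, and passing $\lambda \to \infty$ gives \eqref{e|Z|-}. For \eqref{eq:mr5.2} I would then apply Corollary~\ref{corLimsup} with $\mu = dt \otimes \mathbb{P}$ on $[0,T]\times\Omega$ for arbitrary $T>0$, using crucially that $Z_t^{*,x}$ does not depend on $\alpha$: along the pseudo-weak convergent subsequence $\alpha_n \to 0$ the limsup of $|Z_{\alpha_n,t}^x|_H$ is dominated by $Z_t^{*,x}$, and Corollary~\ref{corLimsup} promotes this to the same bound on the pseudo-weak limit.

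The main technical obstacle is the bookkeeping in the two successive limits: one must ensure that the dissipativity constant for $A_\lambda$ behaves correctly (uniformly, or close enough) as $\lambda \to \infty$ in the computation following \eqref{eq:3.10'}, and then transfer the pointwise inequality through the pseudo-weak limit, where the inequality $|\cdot| \leqslant Z_t^{*,x}$ is preserved only because $Z_t^{*,x}$ is independent of $\alpha$ so Corollary~\ref{corLimsup} applies directly. The non-smoothness of $|\cdot|_H$ at the origin is a minor side issue disposed of as indicated.
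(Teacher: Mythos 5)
Your proposal follows essentially the same route as the paper: the paper simply reuses, for $Z^x_{\lambda,\alpha,t}$ from \eqref{mr4.10.2}, the differential inequality already derived in the proof of Theorem~\ref{t-sol-2} with $\varphi$ equal to the identity, invokes a square-root Gronwall inequality (citing \cite[Theorem 5]{DragomirBook2003}) where you divide by $2\vert Z\vert_H$ with the $\sqrt{\vert Z\vert_H^2+\varepsilon}$ regularization, passes $\lambda\to\infty$ by the same locally uniform convergence, and deduces \eqref{eq:mr5.2} from Corollary~\ref{corLimsup} exactly as you do, with $\mu=dt\otimes\mathbb{P}$ and using that $Z_t^{*,x}$ is independent of $\alpha$. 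The one substantive point is the constant you flag: your derivation from $\frac{d}{dt}\vert Z\vert_H^2\leqslant 2\,a(\vert W_{0,A,\sigma}(t)\vert_H)\vert Z\vert_H-2\beta\vert Z\vert_H^2$ gives coefficient $1$ in front of the integral, not the $\tfrac12$ stated in \eqref{e|Z|-}, and in fact the $\tfrac12$ cannot be extracted from these estimates: for $H=\mathbb{R}$, $A=-\beta$, $F\equiv c$ constant (so $F_\alpha\equiv c$), $x=0$ and $a(u)=\vert c\vert+\epsilon u$ with $\epsilon$ small, the solution of \eqref{e.3.5} is $Z_{\alpha,t}=\tfrac{c}{\beta}(1-e^{-\beta t})$, which violates the bound with $\tfrac12$ on an event of positive probability while saturating the bound with coefficient $1$. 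So your version (coefficient $1$) is the correct one; this is harmless for the rest of the argument, since \eqref{e|Z|} in Theorem~\ref{t-sol-1} already carries coefficient $1$ and the later uses of $Z_t^{*,x}$ (the stopping times \eqref{e-tau-n}, Lemma~\ref{l.4.4}, Corollary~\ref{c-key}) only change by absolute constants if one redefines $Z_t^{*,x}$ accordingly. Your remark about the dissipativity constant of $A_\lambda$ is also apt: \eqref{eq:3.10'} yields a constant $\beta_\lambda\leqslant\beta$ with $\beta_\lambda\to\beta$ as $\lambda\to\infty$, so one either works with $\beta_\lambda$ and recovers $\beta$ in the limit, exactly as you indicate.
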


\begin{proof}\def\alambda{{\ensuremath\alpha'}}
One of the observations in the proof of Theorem~\ref{t-sol-2} was that \eqref{e.Z.est} holds for $Z_{\alambda,\alpha,t}^x$ instead of $Z_{\alpha,t}^x$. Now we can take $\varphi$ to be the identity map and apply \cite[Theorem 5]{DragomirBook2003} to obtain \eqref{e|Z|-}. Equation \eqref{eq:mr5.2} then follows by Corollary~\ref{corLimsup} with $\left( S, \mu \right)=\left([0, \infty) \times \Omega, dt\times\mathbb{P} \right)$.
\end{proof}
\begin{proof}[Proof of Theorem~\ref{t-sol-1}]
The assertion follows from \eqref{e|Z|-} and Proposition~\ref{prop:mr3.5} with $\left( S, \mu \right)=\left([0, \infty) \times \Omega, dt\times\mathbb{P} \right)$.
\end{proof}

\section{Uniform integrability of Girsanov densities}\label{s.4new}

\subsection{Stopping times and Girsanov transforms}\label{subsec-Girsanov-stopping}
Fix $T>0$ and define a sequence of stopping times by
\begin{equation}\label{e-tau-n}
\tau_n^x:=\inf\{t \in[0,T] : Z_t^{*,x} +
|W_{0, A, \sigma}(t)|_{H} > n\} \land T, n\in\mathbb{N}.
\end{equation}
Note that stopping times $\tau_n^x$ do not depend on $\alpha$ and that $\mathbb{P}$-almost surely
\begin{equation}\label{e.4.7n}
\lim_{n\to\infty}\tau_n^x=T.
\end{equation}
Note also that $Z_t^{*,x} +
|W_{0, A, \sigma}(t)|_{H}$
as a $\mathbb P$-a.s.{\ }continuous process.

The following lemma is used in the proof of Theorem~\ref{t-sol-3}	in Section~\ref{s.4.3}.

\begin{lem}\label{l.4.4}
 We have that for any $n>3|x|_H$
\[ \mathbbm 1_{\{t\in[0,\tau^x_n]\}} |X_{t,\alpha}^{x}|_{H} \leqslant n\ \text{ for all } \alpha >0, t\geqslant0.
 \]
\end{lem}
\begin{proof} This follows immediately from \eqref{e|Z|-}.
\end{proof}
Now we consider Girsanov transforms for the Yosida regularized equations \eqref{e.3.5}. For $x\in H$ let
\begin{equation}\label{e-rho}
\rho_\alpha(x, t):=\exp (\zeta_\alpha(x, t)),
\end{equation}
where
\begin{align}
\zeta_\alpha(x, t)& :=\int\limits_{0}^{t}
\langle \sigma^{-1}F_\alpha(s,W_{x, A, \sigma}(s)), dW(s) \rangle \label{e-zeta}
\\
&
-\frac12\int_{0}^{t}|\sigma^{-1}F_\alpha(s,W_{x, A, \sigma}(s))|^2_{H} ds. \notag
\end{align}
We define the measure $\mathbb{P}^x_\alpha$ on $\left( \Omega, \mathcal{F}, \mathcal{F}_{t}, \mathbb{P} \right)$ by
\begin{equation}\label{e-P-x}
\frac{d\mathbb{P}^x_\alpha}{d\mathbb{P}}=\rho_\alpha(x,T)=:\rho_\alpha^x,
\end{equation}
and we denote by $\mathbb{E}^x_\alpha$ the expectation with respect to the probability measure $\mathbb{P}^x_\alpha$ given by \eqref{e-P-x}.
Note that this gives a
weak mild solution to \eqref{e.3.6} according to \cite[Appendix I]{LiuRocknerBook2015}.
More precisely, we define
\begin{equation}\label{e-tX}
\tX(t):=W_{x, A, \sigma}(t).
\end{equation}
Note that this process does not depend on $\alpha$ although the measure $\mathbb{P}^x_\alpha$ does depend on $\alpha$ which is important in \eqref{e-tX-} below. Denote
\begin{equation}\label{e-tW}
\widetilde{W}_{x,\alpha}\left( t \right):=W_t-\int_{0}^{t}\sigma^{-1}F_\alpha(s,W_{x, A, \sigma}(s))\,ds,
\end{equation}
then $\widetilde{W}_{x,\alpha}$ is a cylindrical Wiener process under $\mathbb{P}_\alpha^x$ and
\begin{align}
d\tX\left( t \right)&=d W_{x, A, \sigma} \left(t \right) = A\tX dt+ \sigma d W_t \label{e-tX-}
\\
&
= A\tX dt+F_\alpha(t,\tX(t))dt+\sigma d\tWa(t), \notag
\end{align}
that is, $\tX\left( t \right)$ solves this equation in the mild sense.

\begin{rem}[On localization]
As a side remark we would like to mention that in infinite dimensions the processes defined by \eqref{e-tX-} are not semimartingales in general (unlike in \cite{MetivierBook1982}). One might want to use localization to introduce
\begin{align}
& \tWa^n(t)=W_t-\int_{0}^{t\wedge\tau_n}\sigma^{-1}F_\alpha(s, W_{x, A, \sigma}(s))ds, \label{e-W-n}
\\
& \rho^n_\alpha(x, t)=\exp (\zeta_\alpha(x, t\wedge\tau_n^x)),\label{e-rho-n}
\end{align}
and then define $\rho_\alpha(x,t)$ as a limit as $n\to\infty$, if the limit exists.
However, the localization can not be used easily for the equations with non-smooth coefficients
because interchanging the limits as $n\to\infty$ and $\alpha\to 0$ may be problematic.
We use stopping times in a different way in \eqref{e.4.19-2}.
\end{rem}

\subsection{Estimates of the Girsanov densities and proof of Theorem~\ref{t-sol-3}}\label{s.4.3}

\begin{proof}[Proof of Theorem~\ref{t-sol-3}]
In this proof we assume that $x\in D_F$ and $T>0$ are fixed, and
$t\in[0, T]$. Subsequently we abuse notation, and drop dependence on $x, T$, although our estimates depend on $x$ and $T$.

By \eqref{e-P-x} we have that for any Borel-measurable $\PSI:[0,\infty) \rightarrow [0,\infty)$
\begin{equation}\label{eq:mr5.16}
\mathbb{E}\rho^{x}_\alpha\PSI\left(\rho^{x}_\alpha\right)
=\mathbb{E}_{\alpha}^x \PSI\left(\rho^{x}_\alpha\right),
\end{equation}
where $\rho^{x}_\alpha$ is the density defined by \eqref{e-P-x}.
Note that by \cite{Ondrejat2004a} the distribution of $(\tWa,\tX)$ under the measure $\mathbb{P}^x_\alpha$ is the same as the distribution of $(W, X^{x}_\alpha)$ under the measure $\mathbb{P}$ as we pointed out in Section~\ref{subsec-Girsanov-stopping}. In particular,
by Assumption~\ref{A3}
\begin{align}\label{mr5.15}
X_\alpha^x\in D_F \hskip0.2in dt\times\mathbb{P}-\text{a.s.}
\end{align}

Recall that $(\tWa,\tX)=(\tWa, W_{x, A, \sigma})$. Then
\begin{equation}\label{eq:mr5.17}
 \mathbb{E}^x_\alpha \PSI\left(\rho^{x}_\alpha\right) = \mathbb{E} \PSI\left({\widetilde{\rho^{x}_\alpha}}\right),
\end{equation}
where
\begin{equation}\label{enam}
{\widetilde{\rho^{x}_\alpha}}:=
\exp\left(
\int\limits_{0}^{t}
\langle \sigma^{-1}F_\alpha(s, X^{x}_\alpha(s)), dW(s) \rangle
+
\frac12\int\limits_{0}^{t}|\sigma^{-1}F_\alpha(s,X^{x}_\alpha(s))|_{H}^2\,ds
\right).
\end{equation}
We can estimate $\mathbb{E} \mathbbm{1}_A \vert \widetilde{\rho_\alpha^x}\vert^{p}$ for any $A\in\mathcal{F}$ as follows.
\begin{align*}
&\mathbb{E}\mathbbm{1}_A \left\vert \widetilde{\rho_\alpha^{x}} \right\vert^{p}=\\
 & \mathbb{E}\mathbbm{1}_A \exp\left( p\int_{0}^{t}\langle \sigma^{-1}F_{\alpha}(s, X^{x}_{\alpha}(s)), dW_{s}\rangle
 -p^{2}\int_{0}^{t}\vert \sigma^{-1}F_{\alpha}(s, X^{x}_\alpha(s)) \vert^{2}_{H} ds\right)\\
 &\times\exp\left( \left( p^{2}+\frac{p}{2}\right)\int_{0}^{t}\vert \sigma^{-1}F_\alpha(s, X^{x}_\alpha(s)) \vert^{2}_{H} ds\right)\\
 &\leqslant\left( \mathbb{E}\exp\left( 2p\int_{0}^{t}\langle \sigma^{-1}F_\alpha(s, X^{x}_\alpha(s)), dW_{s}\rangle
 -2p^{2}\int_{0}^{t}\vert \sigma^{-1}F_\alpha(s, X^{x}_\alpha(s)) \vert^{2}_{H} ds\right)\right)^{1/2}\\
&\times\left(\mathbb{E}\mathbbm{1}_A \exp\left( \left( 2p^{2}+p\right)\int_{0}^{t}\vert \sigma^{-1}F_\alpha(s, X^{x}_\alpha(s)) \vert^{2}_{H} ds\right)\right)^{1/2}.
\end{align*}
Note that the first term in the last equation is equal to the expectation of the stochastic exponential for the martingale
$2p\int_{0}^{t}\langle \sigma^{-1}F_\alpha(s, X^{x}_\alpha(s)), dW_{s}\rangle$, and so its expectation is $1$.
Therefore,
\begin{equation}\label{e.4.19}
\mathbb{E}\mathbbm{1}_A \left\vert\widetilde{\rho_\alpha^{x}} \right\vert^{p} \leqslant \left(\mathbb{E}\mathbbm{1}_A \exp\left( \left( 2p^{2}+p\right)\int_{0}^{t}\vert \sigma^{-1}F_\alpha(s, X^{x}_\alpha(s)) \vert^{2}_{H} ds\right)\right)^{1/2}.
\end{equation}

Let us first assume that $\sup\limits_{r>0}a(r)<\infty$. Then taking
$A:=\{\tau_n^x\geqslant t\}$, $n>3|x|_H$, it follows by \eqref{mr5.15}, \eqref{e.3.10} and Assumption~\ref{A4} that
the right-hand side in \eqref{e.4.19} is bounded uniformly in $\alpha$, and therefore
we can choose $\Psi(y)=y^p$
for any $p>0$ to conclude by \eqref{eq:mr5.16} that $\rho_\alpha^x$, $\alpha>0$, are uniformly $L^1(\Omega,\mathbb P)$-integrable.
Now let us assume that
 $$\sup\limits_{r>0}a(r)=\infty.$$ Then we
can construct $\Psi$ as follows.
We take $p=2$ and use Assumption~\ref{A4}, Equations
\eqref{eq:mr5.16}, \eqref{mr5.15}, \eqref{eq:mr5.17}, Proposition~\ref{p.4.Z} and Lemma~\ref{l.4.4},
to deduce that
\begin{equation}
\mathbb{E} \left\vert\widetilde{\rho_\alpha^{x}} \right\vert^{2}
\mathbbm{1}_{ \left\{ \tau_n^x \geqslant t \right\}}
\leqslant \exp\left( 5 \left( a(n) \|\sigma^{-1}\|
\right)^{2} T\right).
\end{equation}
By \eqref{e.4.19} and Chebyshev's inequality we have
\begin{equation*}
\mathbb{P} \left({\widetilde{\rho_\alpha^{x}}} > y \right)\leqslant
\mathbb{P} \left({\widetilde{\rho_\alpha^{x}}} > y,\tau_n^x \geqslant t \right)+\mathbb{P} \left(\tau_n^x < t \right)
\end{equation*}
\begin{equation}\label{e.4.19-2}
\leqslant
\exp\left( 5 \left( a(n) \| \sigma^{-1} \|
\right)^{2} T\right)/y^2+\mathbb{P} \left(\tau_n^x < T \right).
\end{equation}
For $y\in \big( 1+\exp (5(a(0) \| \sigma^{-1} \| )^2T),\infty \big)$ we define $n(y)$ as the maximal natural number such that
\[
\exp\left( 5\left( a(n(y)) \| \sigma^{-1} \|
\right)^{2} T\right)< y
\]
or
\begin{equation}\label{e-a-n}
a(n(y))
<\frac1{\| \sigma^{-1} \|}\left( \frac{\log(y)}{5T}\right)^{1/2}.
\end{equation}
Recall that we assumed in this part of the proof that $a(n)$ is non-decreasing and unbounded, and therefore we have that $n(y)$ is non-decreasing in $y$ and
$
\lim_{y\to\infty}n(y)=\infty
$.
Then we can define the non-increasing function by
\begin{equation}\label{e.4.23}
p_0 (y):= p_0 (y, \sigma, A, a, x, T):=\min \left\{ 1, \, 1/y+\mathbb{P} \left(\tau_{n(y)}^x < T \right) \right\}, \ y\in(0,\infty).
\end{equation}
Observe that by \eqref{e.4.7n}
\[
\lim_{y\to\infty}p_0 (y, \sigma, A, a, x, T)=0.
\]
Suppose $\PSI:[0,\infty)\rightarrow[0,\infty)$ is an increasing function which is $C^1$ on $(0,\infty)$ with $\PSI(0)=0$. Then obviously
\begin{equation}
 \mathbb{E} \PSI\left( Y \right)
 = \mathbb{E}\int_{0}^{\infty}\PSI^{\prime}(y)\mathbbm{1}_{\{0<y<Y\}} dy
 = \int_{0}^{\infty}\PSI^{\prime}(y)\mathbb{P}\{ Y >y\}dy
\end{equation}
for any non-negative random variable $Y$. We apply this equation to $Y=\widetilde{\rho}^x_\alpha$ to see that by \eqref{e.4.19-2}, \eqref{e.4.23} for all $\alpha>0$
\begin{align}\label{e-Psi'}
\mathbb{E} \PSI(\widetilde{\rho}_{\alpha}^x)\leqslant \int_0^{\infty} \PSI^\prime (y)p_0 (y)dy.
\end{align}
Thus by \eqref{eq:mr5.16} and \eqref{eq:mr5.17} this implies uniform $L^1(\Omega,\mathbb{P})$-integrability of $\rho_\alpha^x$, $\alpha>0$, if we can find $\PSI$ as above with the following two properties
\begin{align}\label{eq:mr5.26}
\int_0^\infty \PSI^\prime(y)p_0 (y)dy<\infty
\end{align}
and
\begin{align}\label{eq:mr5.27}
\lim_{y\rightarrow\infty}\PSI(y)=\infty.
\end{align}
The existence of such a $\PSI$ can be seen as follows: since $y\mapsto p_0 (y)$ decreases to zero as $y\rightarrow\infty$, we can find a sequence $\left\{ y_k \right\}_{k\in\mathbb{N}}$ in $(0,\infty)$ such that
\begin{align*}
y_k+3<y_{k+1},\ k\in\mathbb{N},
\end{align*}
and $p_0 (y)\leqslant\frac{1}{k^2}$ for $y\geqslant y_k$. Now define $g:[0,\infty)\rightarrow[0,\infty)$ by
\begin{align*}
g=\sum_{k=1}^\infty \PSI_k,
\end{align*}
where $\PSI_k \in C^\infty((0,\infty))$ such that
\begin{align*}
\mathbbm{1}_{[y_k+1,y_k+2]}\leqslant\PSI_k\leqslant \mathbbm{1}_{[y_k,y_k+3]}.
\end{align*}
Define
\begin{align*}
\PSI(y)=\int_0^y g(s)ds,\ y \geqslant 0.
\end{align*}
Then $\PSI:[0,\infty)\rightarrow [0,\infty)$ is continuous, increasing and $C^\infty$ on $(0,\infty)$. Furthermore, obviously \eqref{eq:mr5.26} and \eqref{eq:mr5.27} hold. Another construction of a function $\PSI$ is given in the beginning of Subsection~\ref{subsec-ex}.

Now let $X^x\in L^2\left(\Omega, \mathbb{P}; L^2\left( [0,T], dt; H \right) \right)=L^2\left([0,T] \times \Omega, dt \times \mathbb{P}; H \right)$ be the pseudo-weak limit of $X^x_{\alpha_n}$ as $\alpha_n\xrightarrow[n\rightarrow\infty]{} 0$, with the corresponding function $\psi: H\rightarrow H$ defined by \eqref{eq:mr3.1*}. By the above and the Dunford-Pettis theorem, choosing another subsequence if necessary we have
\begin{align*}
\rho^x_{\alpha_n}\xrightharpoonup[n\rightarrow \infty]{}\rho^x\ \text{in}\ L^1(\Omega, \mathbb{P})
\end{align*}
for some $\rho^x\in L^1(\Omega, \mathbb{P})$.

Let $\mathcal{X}:=L^2([0,T];H)$ and let $G:\mathcal{X}\rightarrow\mathbb{R}$ be bounded and sequentially weakly continuous. Then for $\mathbb{Q}_x:=\mathbb{P}\circ W_{x,A,\sigma}^{-1}$

\begin{align}\label{eq:Grho}
\begin{split}
\int_{\mathcal{X}} G\circ\psi \ d\big(\mathbb{P}\circ (X^x)^{-1}\big)
&=\int_\Omega G(\psi (X^x)) \ d\mathbb{P}
=\lim_{n\rightarrow\infty}\int_\Omega G(\psi (X^x_{\alpha_n}))\ d\mathbb{P}\\
&=\lim_{n\rightarrow\infty}\int_\Omega (G\circ \psi )(W_{x,A,\sigma})\rho_{\alpha_n}^x d\mathbb{P}
=\int_\Omega (G\circ\psi )(W_{x,A,\sigma})\rho^x\ d\mathbb{P}\\
&=\int_\Omega (G\circ\psi )(W_{x,A,\sigma}) \, \mathbb{E}_{\mathbb{P}}[\rho^x \,| \, W_{x,A,\sigma}]\ d\mathbb{P}
=\int_{\mathcal{X}} G\circ\psi \ \overline{\rho}^x \ d\mathbb{Q}_x,
\end{split}
\end{align}
where
\begin{align*}
\overline\rho^x:=\mathbb{E}_\mathbb{P}[\rho^x \, | \, W_{x,A,\sigma}=\cdot].
\end{align*}

Let $\mathcal{N}$ denote the set of all such functions $G\circ\psi: H \longrightarrow \mathbb{R}$ from above. Since $\psi$ in \eqref{eq:mr3.1*} is one-to-one, we can find a countable set $\mathcal{N}_0\subset \mathcal{N}$, which separates the points in $\mathcal{X}$. Indeed, let $\{e_i \}_{i \in \mathbb{N}}$ and $\{g_i\}_{i \in \mathbb{N}}$ be orthonormal bases of $H$ and $L^2\left( [0,T], dt;\mathbb{R} \right)$ respectively. Define maps $G_{ij}:\mathcal{X}\rightarrow\mathbb{R}$, $i, j\in\mathbb{N}$,

\begin{align*}
G_{ij}(w):=\int_0^T g_i(t)\langle e_j,w(t)\rangle_Hdt,\ w\in\mathcal{X}.
\end{align*}
Then obviously $\{G_{ij}, i,j\in\mathbb{N}\}$ separates the points of $\mathcal{X}$ and hence so does $\mathcal{N}_0:=\{(N\land G_{ij}\lor (-N))\circ\psi, i,j,N \in \mathbb{N}\}$. Clearly, each $N\land G_{ij}\lor (-N)$ is weakly continuous, so $\mathcal{N}_0\subset\mathcal{N}$. Furthermore, obviously $\mathcal{N}$ is closed under multiplication and consists of bounded Borel measurable functions on $\mathcal{X}$. Therefore, \eqref{eq:Grho} implies that
\begin{align*}
\mathbb{P}\circ (X^x)^{-1}=\overline{\rho}^x \mathbb{Q}_x.
\end{align*}
Note that by Kuratowski's Theorem (e.g. \cite[Section I.3]{ParthasarathyBook2005}) $C([0,T];H)$ is a Borel subset of $\mathcal{X}$ such that $\mathbb{Q}_x(C([0,T];H))=1$, therefore
\begin{align*}
(\mathbb{P}\circ (X^x)^{-1})(C([0,T];H))=1,
\end{align*}
so $X^x$ has continuous sample paths $\mathbb{P}$-a.s.
\end{proof}

\subsection{Quantitative estimates
and examples}\label{subsec-ex}

As in Subsection~\ref{s.4.3} we fix $x\in D_F$ and $T>0$ below, and
omit dependence on $x,T$ in the notation.

One of the consequences of the proof of Equation \eqref{w-ent} in Theorem~\ref{t-sol-3} is a constructive approach to finding a function $\PSI(\cdot)$, though it seems not possible to find an optimal form of the function $\PSI(\cdot)$ under conditions of Theorem~\ref{t-sol-3}. Below we present some explicit results. In particular, we explain how to treat the case when the growth of the nonlinearity given by function $a \left( \cdot \right)$ is polynomial.

We begin with the following simple observations about real-valued functions. Assume that we are given a non-increasing function
$
p_0:[0,\infty) \longrightarrow (0,\infty)
$ which
is continuous
at zero and such that
\begin{equation}\label{e-p0-}
\lim\limits_{y\to\infty}p_0 (y)=0.
\end{equation}
Our aim is to find a non-decreasing absolutely continuous function $\Psi:[0,\infty) \longrightarrow [0,\infty)$
 such that
\eqref{eq:mr5.26} and \eqref {eq:mr5.27} hold.
If $p_0 (\cdot)$ is absolutely continuous, then we can simply choose   an  absolutely continuous $\Psi:[0,\infty) \longrightarrow [0,\infty)$ satisfying
\begin{equation}\label{e-PSI-p}\Psi(y)\leqslant  (p_0 (y))^{-1/2}\end{equation}
and show that
\begin{equation}\label{e-psi'}
\int_0^\infty\Psi^{\prime}(y)p_0 (y)dy
=
 -\Psi(0)p_0(0)-\int_0^\infty\Psi(y)p_0^{\prime} (y)dy\leqslant
2\sqrt{p_0(0)}-\Psi(0)p_0(0).
\end{equation}
Even if $p_0 (\cdot)$ is not absolutely continuous,   we can define for $y\in\mathbb R$ and $\delta\in(0,\infty)$
\begin{equation}\label{e-mollifier}
p_\delta(y):=\frac1\delta\,\int_{\mathbb R} \,
p_0 (s-\delta)\,m\left(\frac{s-y}\delta\right)
ds\geqslant p_0 (y),
\end{equation}
where $m(\cdot)$ is a standard mollifier on $\mathbb R$, that is, a smooth non-negative function $m:\mathbb R \to [0,\infty)$ with support in $[-1,1]$ and $L^{1}$-norm equal to $1$. We assume that in this formula $p_0 (y)=p_0 (0)$ if $y\leqslant0$. Equation \eqref{e-mollifier} is different from the usual mollification because of the shift by $\delta$ in $p_0 (s-\delta)$, which ensures that
$p_{\delta}(y)$ is a non-decreasing function of $\delta$. Then $p_{\delta}(y)$ satisfies \eqref{e-p0-}, and this implies the existence of  a smooth function $\PSI(y)=(p_\delta (y))^{-1/2}$ satisfying \eqref{e-PSI-p}.

\begin{rem} For any fixed $\sigma, A, a, x, T$ one can estimate $\PSI(y)$ in \eqref{w-ent}. However, in the most general form this computation is cumbersome and therefore it is not presented in our paper. Instead we illustrate this approach by giving several examples satisfying natural extra assumptions.
\end{rem}
The following is a corollary from the proof of Theorem~\ref{t-sol-3} given in Subsection~\ref{s.4.3}.

\begin{ex}\label{ex-psi}
Suppose \begin{equation}\label{e-yn}
a(y) \leqslant c(1+y)^{n+1}
\end{equation} for some $c\geqslant1, n>0$ and all $y\geqslant0$. Then
we can choose $ \delta_0>0$
such that
\begin{equation}\label{e-PSI-d}
\PSI(y):=\exp\left( \big(\log(1+y) \big)^{\delta_0} \right)
\end{equation}
 satisfies \eqref{w-ent}.

\begin{proof}
Our aim is to show that for $\delta_0>0$ small enough, if we choose $\PSI(y)$ as in
\eqref{e-PSI-d}, it satisfies \eqref{e-PSI-p} and \eqref{e-psi'}.
	
We define a smooth strictly increasing function $a_0:[0,\infty)\to[0,\infty)$ by
\begin{equation}\label{eq-a0y}
a_0(y):=1+y+\int_{-1}^\infty \,
a (s+1)\,m\left({s-y}\right)
ds> a (y)+y,
\end{equation}
where $m(\cdot)$ is a standard mollifier on $\mathbb R$ with support in $[-1,1]$.

Then for $y\in \big( \exp (5(a_0(0) \| \sigma^{-1} \| )^2T),\infty \big)$ we define
\begin{equation}\label{eq-ny}
n(y):=
\left[a_0^{-1}\left( \frac1{\| \sigma^{-1} \|}\left( \frac{\log(y)}{5T}\right)^{1/2} \right)\right],
\end{equation}
where $[z]$ is the integer part of $z$ and $a_0^{-1}(\cdot)$ is the inverse function of the function $a_0(\cdot)$. Note that
\begin{equation}\label{e-a-n-}
\exp\left( 5\left( a(n(y)) \| \sigma^{-1} \|
\right)^{2} T\right)
<
\exp\left( 5\left( a_0(n(y)) \| \sigma^{-1} \|
\right)^{2} T\right)\leqslant
y
\end{equation}
and
\begin{equation}\label{e-a-n-u}
\exp\left( 5\left( a_0(n(y)+1) \| \sigma^{-1} \|
\right)^{2} T\right)> y
\end{equation}
for all $y\in \big( \exp (5(a_0(0) \| \sigma^{-1} \| )^2T),\infty \big)$.
Equation \eqref{e-yn} implies that
\[a_0(y) \leqslant c_0(1+y)^{n+1}\]
for some $c_0\geqslant c$ and all $y\in \big( 1+\exp (5(a(0) \| \sigma^{-1} \| )^2T),\infty \big)$.

Recall that $ W_{0, A, \sigma}^{\ast}=\sup_{t\in[0,T]} |W_{0, A, \sigma}(t)|_H$.
Applying Markov's inequality to Fernique's Theorem as formulated in \eqref{e.3.2},
we have that
\[\mathbb{P}\left( W_{0, A, \sigma}^{\ast} > s\right)<\exp \left(-\varepsilon s^2\right)\mathbb{E} \left( e^{\varepsilon \, [W_{0, A, \sigma}^{\ast}\left( T \right)]^2} \right) \]
 for all $s>0$.
If $p_0 (y)$ is given by \eqref{e.4.23} then, by \eqref{e-PSI-p} and \eqref{e-psi'}, it is enough to verify the following inequality
 \[
 p_0 (y)
 \leqslant
 2\mathbb{P}\left( W_{0, A, \sigma}^{\ast} \geqslant C_1 \,a_0^{-1}\left( \,a_0^{-1}\left( \sqrt{ \log(2+y)}\right)\right)\right)
 \]
 for some constant $C_1 >0$ and all $y\in(y_0,\infty)$ for some $y_0>0$.
 It is enough to show that
 \[
 \mathbb{P} \left(\tau_{n(y)}^x < T \right)
 \leqslant
 \mathbb{P}\left( W_{0, A, \sigma}^{\ast} \geqslant C_1 \,a_0^{-1}\left( \,a_0^{-1}\left( \sqrt{ \log(2+y)}\right)\right)\right).
 \]
 By definition of stopping times in \eqref{e-tau-n} the left-hand side is the same as
 \[
 \mathbb{P} \left( \sup_{t\in[0,T]} Z_t^{*,x} + W_{0, A, \sigma}^{\ast} > n(y)
 \right),
 \]
 which, by \eqref{eq-a0y}, \eqref{eq-ny}, \eqref{e-a-n-}, \eqref{e-a-n-u} is controlled up to constants by
 \[
 \mathbb{P} \left( \sup_{t\in[0,T]} Z_t^{*,x} + W_{0, A, \sigma}^{\ast} > a_0^{-1}\left(\frac1{\| \sigma^{-1} \|}\left( \frac{\log(y)}{5T}\right)^{1/2}\right)
 \right).
 \]
 Using \eqref{e|Z|-} we can find positive constants $D_1,D_2$ such that this probability is smaller than
 \[
 \mathbb{P} \left( a_0\left( D_1+D_2 W_{0, A, \sigma}^{\ast} \right) \geqslant a_0^{-1}\left(\frac1{\| \sigma^{-1} \|}\left( \frac{\log(y)}{5T}\right)^{1/2}\right)
 \right)=
 \]
 \[
 \mathbb{P}\left( W_{0, A, \sigma}^{\ast} \geqslant \frac{ 1}{ D_2} a_0^{-1}\left(a_0^{-1}\left(\frac1{\| \sigma^{-1} \|}\left( \frac{\log(y)}{5T}\right)^{1/2}\right)
 -\frac{ D_1}{ D_2}
 \right)
 \right).
 \]
 This proves \eqref{e-PSI-d} for a large enough constant $D_2$ and small enough constant $\delta_0$.
\end{proof}

In particular, this estimate applies for the example \begin{equation}
F(t,x)=-x|x|_H^n.
\end{equation} More generally
this estimate applies to any function \begin{equation}
F(t,x)=-xf(t,|x|_H)
\end{equation}
where $f(t,y)\geqslant0$ is a real-valued function such that for any $t\geqslant0,y\geqslant0$ we have $f(t,y)<c(1+y)^{n} $. Note that
$F$ is monotone if for any fixed $t\geqslant0$ function $f(t,y)$ is increasing lower semicontinuous for $y\geqslant0$.
This follows from the sub-gradient representation
\begin{equation}
F(t,x)=-\partial \int _0^{ |x|_H }f(t,s)ds,
\end{equation}
see \cite[\S5 and \S24]{RockafellarBook1970} and \cite[Section 1.2]{BarbuBook2010}.

\begin{rem}
	With more tedious computations, which we do not present in our paper, in more general situations $\PSI$ can be defined as
	\begin{equation}\label{e-PSI-p0-}
	\PSI(y):=\left( \mathbb{P}\left( W_{0, A, \sigma}^{\ast} \geqslant C_1 \,a_0^{-1}\left( C_2 \,a_0^{-1}\left( \sqrt{C_3\log(2+y)}\right)\right)\right) \right)^{\varepsilon_1-1}.
	\end{equation}
for some constants $C_1, C_2,C_3>0$ and $\varepsilon_1>0$.
\end{rem}

\end{ex}

\bibliographystyle{amsplain}

\providecommand{\bysame}{\leavevmode\hbox to3em{\hrulefill}\thinspace}
\providecommand{\MR}{\relax\ifhmode\unskip\space\fi MR }
\providecommand{\MRhref}[2]{%
  \href{http://www.ams.org/mathscinet-getitem?mr=#1}{#2}
}
\providecommand{\href}[2]{#2}

\end{document}